\documentclass[a4paper]{amsart}
\oddsidemargin 0mm
\evensidemargin 0mm
\topmargin 10mm
\textwidth 160mm
\textheight 230mm
\tolerance=9999
\usepackage[latin1]{inputenc}
\usepackage{amssymb}
\usepackage{amsmath}
\usepackage{mathrsfs}
\usepackage{amsthm}
\usepackage{mathabx}
\usepackage{amsfonts}
\usepackage{textcomp}
\usepackage{graphicx}
\usepackage[pdftex]{color}
\usepackage{paralist}
\usepackage[shortlabels]{enumitem}
\usepackage{hyperref}
\usepackage{comment}
\usepackage[arrow, matrix, curve]{xy}
\usepackage{enumitem}

\usepackage{tikz-cd}

\newtheorem*{corollary*}{Corollary}
\newtheorem*{theorem*}{Theorem}
\newtheorem{theorem}{Theorem}[section]

\newtheorem{corollary}[theorem]{Corollary}
\newtheorem{lemma}[theorem]{Lemma}
\newtheorem{proposition}[theorem]{Proposition}

\newtheorem*{claim*}{Claim}

\theoremstyle{definition}
\newtheorem{definition}[theorem]{Definition}

\newtheorem*{theorem }{Theorem}
\newtheorem{remark}[theorem]{Remark}
\newtheorem{example}[theorem]{Example}

\theoremstyle{remark}

\numberwithin{equation}{theorem}

\makeatletter
\renewcommand*\env@matrix[1][\
arraystretch]{%
  \edef\arraystretch{#1}%
  \hskip -\arraycolsep
  \let\@ifnextchar\new@ifnextchar
  \array{*\c@MaxMatrixCols c}}
\makeatother

\renewcommand{\mod}{\operatorname{mod-}\!\!}
\newcommand{\lmod}{\!\operatorname{-mod}}

\newcommand{\Ext}{\operatorname{Ext}}
\newcommand{\End}{\operatorname{End}}
\newcommand{\pd}{\operatorname{pd}}
\newcommand{\id}{\operatorname{id}}
\newcommand{\findim}{\operatorname{findim}}

\newcommand{\Hom}{\operatorname{Hom}}
\newcommand{\add}{\operatorname{\mathrm{add}}}
\renewcommand{\top}{\operatorname{\mathrm{top}}}

\renewcommand{\ker}{\mathrm{Ker}}

\newcommand{\domdim}{\operatorname{domdim}}

\newcommand{\Fac}{\operatorname{Fac}}

\newcommand{\F}{\mathcal{F}}
\newcommand{\St}{\Delta}
\newcommand{\pSt}{\overline{\Delta}}
\newcommand{\Cs}{\nabla}
\newcommand{\pCs}{\overline{\nabla}}
\begin{document}

\title{On properly stratified Gorenstein algebras}
\date{\today}

\subjclass[2010]{Primary 16G10, 16E10}

\keywords{properly stratified algebras, minimal Auslander-Gorenstein algebras, Gorenstein algebras, tilting modules, dominant dimension}
\author{Tiago Cruz}
\address{Institute of algebra and number theory, University of Stuttgart, Pfaffenwaldring 57, 70569 Stuttgart, Germany}
\email{tiago.cruz@mathematik.uni-stuttgart.de}
\author{Ren\'{e} Marczinzik}
\address{Institute of algebra and number theory, University of Stuttgart, Pfaffenwaldring 57, 70569 Stuttgart, Germany}
\email{marczire@mathematik.uni-stuttgart.de}

\begin{abstract}
We show that a properly stratified algebra is Gorenstein if and only if the characteristic tilting module coincides with the characteristic cotilting module. We further show that properly stratified Gorenstein algebras $A$ enjoy strong homological properties such as all Gorenstein projective modules being properly stratified and all endomorphism rings $\End_A(\Delta(i))$ being Frobenius algebras.
We apply our results to the study of properly stratified algebras that are minimal Auslander-Gorenstein algebras in the sense of Iyama-Solberg and calculate under suitable conditions
their Ringel duals. This applies in particular to all centraliser algebras of nilpotent matrices.
\end{abstract}

\maketitle
\section*{Introduction}
Quasi-hereditary algebras constitute an important class of finite dimensional algebras including many well-studied algebras such as algebras of global dimension at most two,  Schur algebras, see for example \cite{D} or \cite{Gre}, and blocks of category $\mathcal{O}$, see for example \cite{H}.

Standardly stratified algebras were introduced as a generalisation of quasi-hereditary algebras in \cite{CPS}. It can be shown that a standardly stratified algebra $A$ is quasi-hereditary if and only if $A$ has finite global dimension and standardly stratified algebras include many important algebras with infinite global dimension arising for example in Lie theory, see \cite{M2}. 
Standardly stratified algebras always have a characteristic tilting module but in general no characteristic cotilting module. When the opposite algebra of a standardly stratified algebra is also standardly stratified, the algebra is properly stratified.
The properly stratified algebras where the characteristic tilting and cotilting modules coincide are of particular importance as they have many strong homological properties that are in general missing in the more general case of standardly stratified algebras, see for example \cite{FrMa}. 
In \cite{MP}, Mazorchuk and Parker conjectured that the finitistic dimension of any properly stratified algebra with a simple preserving duality is equal to two times the projective dimension of the characteristic tilting module.
This conjecture was proven by Mazorchuk and Ovsienko in \cite{MazOv} under the additional assumption that the characteristic tilting module coincides with the characteristic cotilting module and in \cite{M} it was shown that the conjecture is wrong without this assumption.
Our main result gives a new homological characterisation when the characteristic tilting module coincides with the characteristic cotilting module and implies for example that the main theorem of Mazorchuk and Ovsienko can be simplified as the finitistic dimension coincides with the Gorenstein dimension of the algebra under their assumptions.
\begin{theorem*}[\ref{mainresult}]
For a properly stratified algebra $A$, the characteristic tilting module coincides with the characteristic cotilting module if and only if $A$ is Gorenstein.
\end{theorem*}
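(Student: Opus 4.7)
The plan is to prove the two implications separately.

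\emph{For ``$T = C$ implies $A$ Gorenstein'':} Since $T$ is tilting, there is a finite coresolution
\[
0 \to {}_AA \to T^0 \to T^1 \to \cdots \to T^s \to 0, \qquad T^i \in \add T.
\]
Under $T = C$, $T$ is also cotilting, so $\id T < \infty$ and hence $\id T^i < \infty$ for every $i$. Splitting the coresolution into short exact sequences and iterating the long exact $\Ext$-sequence yields $\id{}_AA < \infty$. The same reasoning applied to $A^{\mathrm{op}}$ (which is properly stratified with characteristic cotilting $DT$ and characteristic tilting $DC$, and for which $T = C$ translates into the analogous equality) gives $\id A_A < \infty$, so $A$ is Iwanaga--Gorenstein.

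\emph{For ``$A$ Gorenstein implies $T = C$'':} My plan uses two standard homological inputs. First, over an Iwanaga--Gorenstein algebra a finitely generated module has finite projective dimension if and only if it has finite injective dimension; since $\pd T < \infty$ (as $T$ is tilting), this gives $\id T < \infty$. Second, a classical theorem (Happel; Miyashita) says that a tilting module with finite injective dimension is automatically a cotilting module. Hence $T$ is both tilting and cotilting.

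It remains to identify this tilting-cotilting $T$ with the characteristic cotilting $C$. As $T$ has a $\Delta$-filtration (and hence a $\bar\Delta$-filtration) it already lies in $\F(\bar\Delta)$; what is missing is $T\in\F(\nabla)$, i.e.\ that $T$ admits a $\nabla$-filtration. The plan here is to exploit the finite $\add T$-resolution
\[
0\to T_n\to\cdots\to T_0\to D(A)\to 0
\]
guaranteed by cotilting, together with the $\nabla$-filtration of $D(A)$ (present because $A^{\mathrm{op}}$ is standardly stratified), to transport the vanishing $\Ext^{>0}(\bar\Delta(j),D(A))=0$ (from the injectivity of $D(A)$) along the resolution into $\Ext^{>0}(\bar\Delta(j),T)=0$, which is the Ext-criterion characterising $\F(\nabla)$. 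Uniqueness of the basic cotilting module lying in $\F(\bar\Delta)\cap\F(\nabla)$ then forces $T = C$.

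The main obstacle I foresee is precisely this last step. The Iwanaga--Gorenstein pd/id equivalence and the Happel--Miyashita theorem are black boxes, but upgrading ``$T$ has a $\bar\nabla$-filtration'' to ``$T$ has a $\nabla$-filtration'' is delicate: naive dimension-shifting along the $\add T$-resolution of $D(A)$ does not by itself yield the required Ext-vanishing, and one must genuinely use the properly stratified (as opposed to merely standardly stratified) structure, likely through a subtle interplay between proper and full standard/costandard objects or by passing via the tilting-cotilting duality to the opposite algebra.
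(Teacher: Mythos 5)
Your first implication ($T=C$ implies Gorenstein) is correct, and is in fact a mild variant of the paper's argument: the paper deduces that $D(A)$ has finite projective dimension from $D(A)\in\F(\Cs)=\widehat{\add(T)}$ and then invokes finiteness of the finitistic dimensions of $A$ and $A^{op}$ (Theorem \ref{finitisticbound} together with Proposition \ref{auslander-reitenfindimresult}), whereas you obtain both one-sided injective dimensions of $A$ directly, using the left--right symmetry of the hypothesis $T=C$ (the characteristic tilting and cotilting modules of $A^{op}$ are $DC$ and $DT$). Either route works. Likewise, over a Gorenstein algebra the characteristic tilting module $T$ is automatically cotilting; this is Theorem \ref{gorenstein characterisation}, so that black box is available.

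The genuine gap is exactly the step you flag yourself: in the direction ``Gorenstein implies $T=C$'' you never prove $T\in\F(\Cs)$, i.e.\ $\Ext^1_A(\pSt(i),T)=0$ for all $i$, and the mechanism you propose cannot produce it. Dimension shifting along a finite resolution $0\to T_n\to\cdots\to T_0\to D(A)\to 0$ transfers $\Ext$-vanishing from the terms $T_i$ to $D(A)$, never the other way: applying $\Hom_A(\pSt(j),-)$ to $0\to K_1\to T_0\to D(A)\to 0$ only exhibits $\Ext^1_A(\pSt(j),T_0)$ as a quotient of $\Ext^1_A(\pSt(j),K_1)$, about which nothing is known, so the injectivity of $D(A)$ gives no constraint on $T$. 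The missing idea in the paper is a maximal-degree argument combining three inputs: (i) $\St(i)\in\F(\pSt(i))$ (Lemma \ref{filtrations of standard in proper standard}; note this is also the lemma silently used in your claim that a $\St$-filtration yields a $\pSt$-filtration, and it needs properly, not just standardly, stratified); (ii) $T\in\F(\pCs)$, hence $\Ext^{k}_A(\St(i),T)=0$ for all $k\geq 1$ by Theorem \ref{propstratresults2}; (iii) $T$ has finite injective dimension, since $A$ is Gorenstein and $\pd T<\infty$. If $\Ext^{s}_A(\pSt(i),T)\neq 0$ for some $s\geq 1$, choose $s$ maximal with this property (possible by (iii)); applying $\Hom_A(-,T)$ along a filtration $0=X_0\subset X_1\subset\cdots\subset X_t=\St(i)$ with all factors isomorphic to $\pSt(i)$, the vanishing of $\Ext^{s+1}_A(\pSt(i),T)$ propagates the nonvanishing of $\Ext^{s}_A(-,T)$ from $X_1=\pSt(i)$ up to $X_t=\St(i)$, contradicting (ii). Hence $\Ext^{\geq 1}_A(\pSt(i),T)=0$, so $T\in\F(\pSt)\cap\F(\Cs)=\add C$, and the uniqueness step you describe then finishes the proof. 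Your skeleton (cotilting plus membership in $\F(\pSt)\cap\F(\Cs)$ plus uniqueness) matches the paper, but without an argument of this kind the central step is missing.
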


We show that properly stratified Gorenstein algebras enjoy strong homological properties. The next theorem collects the most important properties.
\begin{theorem*}[\ref{projectiveisproperlystratified}, \ref{endofstandardisfrobenius}]
Let $A$ be a properly stratified Gorenstein algebra with standard modules $\Delta$.
\begin{enumerate}
\item Every Gorenstein projective $A$-module is properly stratified (i.e. it belongs to $\mathcal{F}(\overline{\Delta})$).
\item A module $M$ is in $F(\Delta)$ if and only if $M$ is in $F(\overline{\Delta})$ with finite projective dimension.
\item All endomorphism rings $\End_A(\Delta(i))$ are Frobenius algebras.

\end{enumerate}

\end{theorem*}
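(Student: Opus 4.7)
The plan is to leverage Theorem~\ref{mainresult} throughout: since $A$ is properly stratified and Gorenstein, the characteristic tilting module $T$ coincides with the characteristic cotilting module $C$, and I denote this common module by $T$. For part~(1), I combine two observations. Over the Gorenstein algebra $A$, every Gorenstein projective module $M$ is $\Ext^{>0}$-orthogonal to every module of finite projective dimension, and the characteristic tilting module $T$ has finite projective dimension; hence $\Ext^{i}_A(M, T) = 0$ for all $i > 0$. Since $T = C$ is the characteristic cotilting module, the standard cotilting characterisation $\mathcal{F}(\overline{\Delta}) = \{N : \Ext^{i}_A(N, C) = 0 \text{ for all } i > 0\}$, valid for properly stratified algebras, then places $M$ in $\mathcal{F}(\overline{\Delta})$.

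For part~(2), the forward direction is immediate: each standard module has a proper-standard filtration, so $\mathcal{F}(\Delta) \subseteq \mathcal{F}(\overline{\Delta})$, and $\pd_A \Delta(i) \le \pd_A T < \infty$ gives the finiteness of projective dimension. For the converse, I would take $M \in \mathcal{F}(\overline{\Delta})$ with $\pd_A M < \infty$ and use the cotilting structure of $T = C$ to obtain a right $\add T$-coresolution $0 \to M \to T^0 \to T^1 \to \cdots$. The hypotheses $\pd_A M < \infty$ and $\id_A T < \infty$ force this coresolution to terminate, and since $\add T \subseteq \mathcal{F}(\Delta)$ while $\mathcal{F}(\Delta)$ is closed under kernels of epimorphisms between its members, descending induction on the length of this coresolution yields $M \in \mathcal{F}(\Delta)$.

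For part~(3), I would set $B_i = \End_A(\Delta(i))$, a local finite-dimensional algebra, and exploit the standard fact that $\Delta(i)$ is free of rank one as a right $B_i$-module, i.e.\ $\Delta(i) \cong B_i$. To prove $B_i$ Frobenius (equivalently selfinjective, as it is local), it then suffices to produce an isomorphism $B_i \cong DB_i$ of right $B_i$-modules, where $D$ denotes $k$-duality. The Gorenstein hypothesis together with $T = C$ should supply this via the Nakayama functor: the self-duality $T = C$ translates into a duality between the top and the socle of $\Delta(i)$ viewed as a $B_i$-module, yielding the Frobenius property.

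The main obstacle is part~(3): carefully extracting the local self-duality on $B_i$ from the global self-duality encoded by $T = C$. This step will require explicit use of the Nakayama functor and of the Gorenstein projective/injective duality on finitely generated $A$-modules, together with a careful tracking of the $\overline{\Delta}(i)$-filtration of $\Delta(i)$ and of the way $B_i$ acts on it. The converse direction of part~(2) is a secondary difficulty, since termination of the $T$-coresolution must be established from the interplay of $\pd_A M < \infty$ and $\id_A T < \infty$.
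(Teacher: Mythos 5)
Parts (1) and (2) of your proposal are essentially the paper's own arguments. For (1), the paper proves the dual statement by dimension shifting along a finite (co)resolution to get $D(A)^{\perp}\subseteq T^{\perp}$, which is the same mechanism as your observation that a Gorenstein projective module is $\Ext^{>0}$-orthogonal to the finite-projective-dimension module $T=C$, combined with ${}^{\perp}C=\F(\overline{\Delta})$ from Theorem \ref{propstratresultsg}. For (2), the paper also takes a (possibly infinite) $\add(T)$-coresolution of $M\in\F(\overline{\Delta})$ and forces it to terminate; note only that the finiteness driving termination is $\id_A M<\infty$, obtained from $\pd_A M<\infty$ together with Gorensteinness (a non-split cosyzygy sequence at stage $i$ gives $\Ext_A^{i+1}(M_{i+1},M)\neq 0$ since the cosyzygies lie in ${}^{\perp}T$), not $\id_A T<\infty$ as you write; with that correction your sketch matches the paper.

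Part (3) has a genuine gap. The ``standard fact'' you invoke, that $\Delta(i)$ is free of rank one over $B_i=\End_A(\Delta(i))$, is false. Already at the maximal index, where $\Delta(n)=e_nA$ and $B_n=e_nAe_n$, rank-one freeness would force $e_nAe_j=0$, i.e. $\Hom_A(e_jA,e_nA)=0$ for all $j\neq n$; this fails for the paper's own GIGS example $A=\End_U(U\oplus S)$ with $U=K[x]/(x^3)$, where the maximal index corresponds to the summand $S$, the projective $\Delta(n)$ is two-dimensional, and $B_n\cong\End_U(S)\cong K$, so $\Delta(n)$ is free of rank two (what is true, via Lemma \ref{filtrations of standard in proper standard}, is freeness of some rank). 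Without rank one, an isomorphism $B_i\cong DB_i$ cannot be read off from $\Delta(i)$ as you suggest, and your remaining step --- that ``$T=C$ translates into a duality between top and socle of $\Delta(i)$'' --- is not an argument but exactly what must be proved. You are also missing the structural reduction the paper uses: for $i<n$ the module $\Delta(i)$ is not projective over $A$, so the paper first treats $i=n$ by building a finite $\add(T)$-resolution of the injective module $D\Hom_A(\Delta(n),A)$ (contravariant finiteness of $\F(\Delta)$, Wakamatsu's Lemma, termination from Gorensteinness), then applies $\Hom_A(\Delta(n),-)$ to conclude that $D R$, with $R=\End_A(\Delta(n))$, has finite projective dimension over the local algebra $R$, hence is projective because local algebras have finitistic dimension zero (Lemma \ref{localalgebralemma}), and is also injective, so $R$ is selfinjective local, hence Frobenius; finally it shows $A/Ae_nA$ is again properly stratified Gorenstein (projectivity of $Ae_nA$ plus Proposition \ref{extlemmaidemideal}) and inducts. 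Some substitute for this induction and for the finitistic-dimension-zero mechanism would be needed to turn your sketch of (3) into a proof.
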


We introduce GIGS algebras as gendo-symmetric properly stratified Gorenstein algebras having a duality. Those algebras generalise the algebras introduced by Fang and Koenig in \cite{FanKoe3} from the quasi-hereditary scenario to the more general case of properly stratified algebras. We will see that central results obtained by Fang and Koenig still hold even when the algebras are not necessarily quasi-hereditary. The class of GIGS algebras contain all Schur algebras $S(n,k)$ for $n \geq k$, blocks of category $\mathcal{O}$ and we will show that they also contain centraliser algebras of nilpotent matrices, which are properly stratified but in general not quasi-hereditary.  
By the result of Mazorchuk-Ovsienko, GIGS algebras always have even Gorenstein dimension and since they are gendo-symmetric by definition they are always isomorphic to an algebra of the form $\End_U(X)$ for a symmetric algebra $U$ and a generator $X$ of $\mod U$. One of the most important subclasses of Gorenstein algebras are the minimal Auslander-Gorenstein algebras introduced by Iyama and Solberg in \cite{IyaSol} as a generalisation of higher Auslander algebras that were introduced in \cite{Iya}.
Our main result for GIGS algebras that are minimal Auslander-Gorenstein is as follows:

\begin{theorem*}[\ref{Ringeldualtheorem}]
Let $A=\End_U(U \oplus M)$ be a GIGS algebra with a symmetric algebra $U$ and a generator $U \oplus M$ of $\mod U$, where we can assume that $M$ has no projective direct summands.
Assume $A$ is furthermore a minimal Auslander-Gorenstein algebra with Gorenstein dimension equal to $2d$ for some $d \geq 1$.
Then the Ringel dual $R_A$ of $A$ is isomorphic to $\End_U(U \oplus \Omega^d(M))$. In particular, $R_A$ is again a GIGS algebra that is minimal Auslander-Gorenstein with Gorenstein dimension $2d$.

\end{theorem*}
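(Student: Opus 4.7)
I would extend the approach of Fang and Koenig \cite{FanKoe3} from the quasi-hereditary to the properly stratified Gorenstein setting, exploiting the homological structure of GIGS algebras established earlier in this paper. First, I would pin down the projective dimension of the characteristic tilting module $T$. By Theorem \ref{mainresult}, $T$ coincides with the characteristic cotilting module, and since $A$ is minimal Auslander-Gorenstein of Gorenstein dimension $2d$ one has $\findim A = \idim A = 2d$. Because $A$ has a duality and $T$ equals the cotilting module, the Mazorchuk-Ovsienko formula $\findim A = 2\pdim T$ then forces $\pdim T = d$.

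Next I would identify $T$ explicitly. Writing $X = U \oplus M$ and $F = \Hom_U(X, -)$, the functor $F$ is fully faithful on $\add_U(X)$ and restricts to an equivalence onto $\proj A$. The minimal Auslander-Gorenstein condition, translated via the Iyama-Solberg correspondence applied to the gendo-symmetric algebra $A = \End_U(X)^{op}$, amounts to $\Ext^i_U$-vanishing ensuring that the first $d$ terms of a minimal projective resolution of $M$ over $U$ lie in $\add X$; consequently $F$ sends such a resolution to a projective resolution of length $d$ of $F(M)$, and so $F(U \oplus \Omega^d M)$ is an $A$-module of projective dimension $d$. Using the characterisation $\F(\Delta) = \F(\overline{\Delta}) \cap \{N : \pdim N < \infty\}$ from the second boxed theorem, the fact that Gorenstein projective $A$-modules are properly stratified, and the Frobenius property of each $\End_A(\Delta(i))$ to control filtration multiplicities, I would show that $F(U \oplus \Omega^d M)$ lies in $\F(\Delta) \cap \F(\nabla)$ with the correct multiplicities and is therefore isomorphic to $T$.

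Once this identification is in hand, the Ringel dual is computed by full-faithfulness of $F$ on the subcategory generated by $U \oplus \Omega^d M$ (again a consequence of the same $\Ext$-vanishing):
\[
R_A = \End_A(T)^{op} \cong \End_A\bigl(F(U \oplus \Omega^d M)\bigr)^{op} \cong \End_U(U \oplus \Omega^d M).
\]
The \emph{moreover} statement then follows by applying the same analysis to $R_A$: iterating $\Omega^d$ returns to the original data up to projective summands in the stable category of the symmetric algebra $U$, so the new endomorphism algebra is again a GIGS algebra that is minimal Auslander-Gorenstein of dimension $2d$. The main obstacle will be the identification $T \cong F(U \oplus \Omega^d M)$, since in the properly stratified setting one must juggle both $\Delta$ and $\overline{\Delta}$; the key leverage is our main theorem together with items (2) and (3) of the second boxed theorem, which together pin down the class $\F(\Delta)$ and provide the Frobenius structure needed to verify that the $\Delta$- and $\nabla$-filtration multiplicities match those of the characteristic tilting module.
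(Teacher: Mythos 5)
There is a genuine gap at the central step of your plan: the identification $T \cong F(U \oplus \Omega^d M)$ with $F = \Hom_U(U\oplus M,-)$ is false in general. Every module in the image of $F$ has dominant dimension at least two (apply $F$ to an injective copresentation of a $U$-module; the terms $F(I)$ are projective-injective over $A$), whereas by Theorem \ref{fankoetheorem}(2) the characteristic tilting module satisfies $\domdim T = \tfrac{1}{2}\domdim A = d$. So for $d=1$ (a case the theorem explicitly allows, and exactly the case needed for the centraliser algebras of nilpotent matrices) $T$ cannot lie in the image of $F$. Concretely, take $U=K[x]/(x^2)$ and $M=S$ the simple module: then $A=\End_U(U\oplus S)$ is a GIGS minimal Auslander--Gorenstein algebra with $2d=2$ and $\Omega^1(S)\cong S$, so your identification would give $T\cong \Hom_U(U\oplus S,U\oplus S)=A$, i.e.\ $\pd T=0$, contradicting the equality $\pd T=d=1$ that you yourself derived from Mazorchuk--Ovsienko. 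The same contradiction occurs whenever $d=1$ and $M\cong\Omega^1(M)$. Note also that for $d=1$ the Iyama--Solberg conditions impose no $\Ext$-vanishing at all ($\Ext^i_U(X,X)=0$ only for $1\le i\le 2d-2$), so the exactness of $F$ on the syzygy sequences that you invoke is unavailable; and the remark that the terms of a projective resolution of $M$ lie in $\add X$ is automatic (they are projective) and not the point.

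The paper's proof avoids this entirely: it quotes the identification $T\cong eA\oplus\Omega^{-d}_A(A)$ from \cite{Mar} and then computes the endomorphism ring not through $F$ but through the Schur functor $\Hom_A(eA,-)$, using \cite[Lemma 3.1 (2)(ii)]{APT}, which gives $\End_A(T)\cong\End_{eAe}(eT)$ as soon as $T$ has dominant and codominant dimension at least one; since $eT\cong U\oplus\Omega^{-d}_U(M)$ up to projective summands and $M$ is $2d$-periodic, this yields $R_A\cong\End_U(U\oplus\Omega^d(M))$ even though $T$ itself is not of the form $F(U\oplus\Omega^d M)$. For $d\ge 2$ your identification could be repaired via the double centralizer property ($\domdim T=d\ge 2$ forces $T\cong\Hom_U(X,eT)$), but you would still need to prove $eT\cong U\oplus\Omega^{-d}(M)$, i.e.\ essentially the identification $T\cong eA\oplus\Omega^{-d}_A(A)$, and the proposed filtration-multiplicity argument via Gorenstein projective modules and the Frobenius property of $\End_A(\Delta(i))$ does not supply this. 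Finally, for the ``in particular'' statement the paper uses the almost $\nu$-stable derived equivalences of \cite{HX} to transfer Gorenstein and dominant dimensions, together with Theorem \ref{fankoetheorem} for the gendo-symmetric, properly stratified and duality structure of $R_A$; the remark that ``iterating $\Omega^d$ returns to the original data'' does not by itself establish these properties of $R_A$.
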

Recall that for an $n \times n$-matrix $M$ with entries in a field $K$, the centralizer algebra of $M$ is defined as the $K$-algebra of all $n \times n$-matrices $X$ with $XM=MX$.
In Subsection \ref{Centraliser algebras of nilpotent matrices}, we will apply the previous theorem to calculate the Ringel duals of centraliser algebras of nilpotent matrices and determine when they are Ringel self-dual.

\section{Preliminaries}
In this paper, unless stated otherwise, all algebras under consideration are finite dimensional over a field $K$ and all modules are right modules.
We assume the reader is familiar with the basic representation theory and homological algebra of finite dimensional algebras and refer for example to \cite{SkoYam} and \cite{ARS} for a basic introduction.
For a subcategory $\mathcal{C}$ of $\mod A$ for a finite dimensional algebra $A$, we denote by $\add(\mathcal{C})$ the full subcategory of direct sums of direct summands of a module $X \in \mathcal{C}$. For a module $X$, $\add(X)$ simply denotes the full subcategory of modules that are direct summands of $X^n$ for some $n$. By $D$ we denote the functor $\Hom_K(-, K)$ and $J$ denotes the Jacobson radical of an algebra $A$.
For a subcategory $\mathcal{U}$ of $\mod A$ we denote by $\widehat{\mathcal{U}}$ (or $\widecheck{\mathcal{U}}$) the full subcategory of $\mod A$ consisting of modules $X$ such that there is an exact sequence $0 \rightarrow C_n \rightarrow \cdots \rightarrow C_0 \rightarrow X \rightarrow 0$ (or $0 \rightarrow X \rightarrow C_0 \rightarrow \cdots \rightarrow C_n \rightarrow 0$) with $C_i \in \mathcal{U}$. 
For a module $X$ we define ${}^{\perp}X:= \{ Y \in \mod A | \Ext_A^i(Y,X)=0$ for $i>0 \} $ and $X^{\perp}:= \{ Y \in \mod A | \Ext_A^i(X,Y) =0 $ for $i >0 \}$.
By $\Fac(X)$ we denote the full subcategory of modules that are an epimorphic image of $X$.
We denote by $P^{<\infty}$ the full subcategory of modules having finite projective dimension and by $I^{<\infty}$ the full subcategory of modules having finite injective dimension. 
A subcategory $\mathcal{U}$ of $\mod A$ is called \emph{resolving} if it contains all projective $A$-modules and is closed under direct summands, extensions and kernels of epimorphisms.
A \emph{coresolving} subcategory is defined dually.
\subsection{Gorenstein algebras and cotilting modules}

An algebra $A$ is called \emph{Gorenstein} in case the left and right injective dimensions of the regular module $A$ are finite, in which case they coincide and the common number is the \emph{Gorenstein dimension} of $A$. 
A \emph{selfinjective algebra} is a Gorenstein algebra with Gorenstein dimension zero, or equivalently an algebra where all projective modules are injective. $A$ is called \emph{Frobenius algebra} when $D(A) \cong A$ as $A$-right modules and it is called \emph{symmetric} when $D(A) \cong A$ as $A$-bimodules. Every Frobenius algebra is selfinjective and every selfinjective quiver algebra is a Frobenius algebra.
An algebra $A$ is called \emph{gendo-symmetric} if $D(A) \otimes_A D(A) \cong D(A)$ or equivalently when $A$ is isomorphic to $\End_B(M)$ for a symmetric algebra $B$ with a generator $M$ of $\mod B$. We refer to \cite{FanKoe} and \cite{FanKoe2} for more information and properties of gendo-symmetric algebras.
The \emph{finitistic dimension} of an algebra $A$ is defined as the supremum of all projective dimensions of modules having finite projective dimension. A module $C\in \mod A$ is called \emph{cotilting} if it has no self-extensions, finite injective dimension and $DA\in \widehat{\add C}$. Tilting modules are defined, analogously.

\begin{theorem} \label{gorenstein characterisation}
The following are equivalent:
\begin{enumerate}
\item $A$ is Gorenstein.
\item A module is a tilting module if and only if it is a cotilting module.
\item $P^{< \infty}=I^{< \infty}$.
\end{enumerate}
\end{theorem}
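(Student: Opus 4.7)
The plan is to handle the easy equivalence $(1) \Leftrightarrow (3)$ first, then $(2) \Rightarrow (1)$, and finally the content-rich $(1) \Rightarrow (2)$.

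For $(1) \Rightarrow (3)$, I would use dimension shifting: if $A$ has Gorenstein dimension $d$, every projective module lies in $\add A$ and so has injective dimension at most $d$, and induction on the length of a finite projective resolution of any $M \in P^{<\infty}$ then yields $\idim M < \infty$. Dually, the left injective dimension of $A$ being finite translates, via the standard $D$-duality identity $\pdim DA = \idim({}_A A)$, into the statement that every injective right $A$-module has finite projective dimension, giving $I^{<\infty} \subseteq P^{<\infty}$. For the converse $(3) \Rightarrow (1)$, the regular module $A$ lies trivially in $P^{<\infty}$, hence in $I^{<\infty}$ by $(3)$, so $\idim A < \infty$; symmetrically $DA \in I^{<\infty}$ forces $\pdim DA < \infty$, which is equivalent to the left injective dimension of $A$ being finite, so $A$ is Gorenstein.

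For $(2) \Rightarrow (1)$, the regular module $A$ is trivially tilting: projective, self-extension-free, and $A \in \widecheck{\add A}$. Hence by $(2)$ it is cotilting, whence $\idim A < \infty$. Symmetrically, $DA$ is trivially cotilting, hence tilting by $(2)$, which gives $\pdim DA < \infty$ and thus that the left injective dimension of $A$ is finite. Both injective dimensions of $A$ are therefore finite, so $A$ is Gorenstein.

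The main content lies in $(1) \Rightarrow (2)$. Here I would invoke the classical characterisation (due to Bongartz, Happel and Ringel) that a module $T$ satisfying $\Ext_A^i(T,T) = 0$ for all $i > 0$ and having finite projective dimension is a tilting module if and only if the number of pairwise non-isomorphic indecomposable summands of $T$ equals the number of simple $A$-modules, together with its dual for cotilting modules using finite injective dimension. Since $(1) \Leftrightarrow (3)$ has already been shown, finite projective dimension and finite injective dimension coincide on any such $T$, so the tilting and cotilting criteria become identical and the two classes agree. The step I expect to be the main obstacle is precisely this invocation of the summand-count form of the tilting criterion, whose proof ultimately relies on Bongartz completion; a more self-contained but cumbersome alternative would build a finite $\add T$-resolution of $DA$ directly from the $\add T$-coresolution of $A$ together with a finite projective resolution of $DA$ (available by $(3)$) via a double complex, and I would only resort to this if the summand-count characterisation were unavailable.
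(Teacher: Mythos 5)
Your arguments for $(1)\Leftrightarrow(3)$ and for $(2)\Rightarrow(1)$ are correct and elementary (note the paper gives no proof at all here, simply citing \cite{HU}). The genuine gap is in your main step $(1)\Rightarrow(2)$. The ``classical characterisation'' you invoke -- that a self-orthogonal module of finite projective dimension whose number of indecomposable summands equals the number of simples is automatically a tilting module -- is a theorem of Bongartz and Happel--Ringel only for projective dimension at most one. In the generality you need (arbitrary finite projective dimension, which is the whole point for Gorenstein algebras of higher Gorenstein dimension) it is not an available result: Bongartz completion fails there (Rickard and Schofield constructed self-orthogonal modules of finite projective dimension that are not direct summands of any tilting module), and since a self-orthogonal module of finite projective dimension with $n$ indecomposable summands is tilting precisely when it is a direct summand of \emph{some} tilting module, your summand-count criterion is exactly such a completability statement, which is not known in general, let alone classical. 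So as written, $(1)\Rightarrow(2)$ is not proven.

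Your fallback sketch is closer to a correct proof but is also incomplete: splicing a finite projective resolution of $D(A)$ with finite $\add(T)$-coresolutions of projectives via a double complex only shows that $D(A)$ is quasi-isomorphic to a bounded complex with terms in $\add(T)$; it does not by itself yield the one-sided exact sequence $0 \rightarrow T_s \rightarrow \cdots \rightarrow T_0 \rightarrow D(A) \rightarrow 0$ needed for $D(A)\in \widehat{\add T}$, because the terms sitting in positive degrees still have to be removed, and that step requires Ext-vanishing/approximation input rather than pure homological bookkeeping. The standard argument (essentially what \cite{HU} rests on) runs as follows: $D(A)$ is injective, hence lies in $T^{\perp}$ for any tilting module $T$; using approximations with respect to $\add(T)$ (Wakamatsu's lemma, or Auslander--Buchweitz theory for the subcategories attached to $T$) one builds an exact $\add(T)$-resolution of $D(A)$ with all kernels again in $T^{\perp}$; and this resolution must terminate because $\pd D(A)<\infty$, which is exactly what Gorensteinness supplies via $(1)\Leftrightarrow(3)$. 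The dual argument shows every cotilting module is tilting. Note that the paper itself carries out precisely this kind of termination argument later, in the proof of Theorem \ref{endofstandardisfrobenius}, when producing a finite $\add(T)$-resolution of an injective module, so you could model the missing step on that.
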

\begin{proof}
See \cite[Lemma 1.3]{HU}.
\end{proof}

\begin{proposition} \label{auslander-reitenfindimresult}
	Let $A$ be an algebra such that $A$ and $A^{op}$ have finite finitistic dimension.
	In case the left or right injective dimension of $A$ is finite, $A$ is Gorenstein. 
	In this case the Gorenstein dimension coincides with the finitistic dimension of $A$ and $A^{op}$.
\end{proposition}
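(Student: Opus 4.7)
By symmetry I may assume $\idim A_A = d < \infty$; my plan is to verify criterion (3) of Theorem~\ref{gorenstein characterisation}, namely $P^{<\infty} = I^{<\infty}$ for right $A$-modules, hence that $A$ is Gorenstein, and then to deduce the coincidence of the Gorenstein dimension with both finitistic dimensions.

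For the inclusion $P^{<\infty} \subseteq I^{<\infty}$ on the right I would observe that every projective right $A$-module has injective dimension at most $d$, so for $M$ right with $\pd M = n < \infty$ a straightforward induction along a projective resolution, using the bound $\idim B \le \max(\idim A', \idim C)$ for a short exact sequence $0 \to A' \to B \to C \to 0$, yields $\idim M \le n + d$. The duality $\idim_A N = \pd_{A^{op}} DN$ combined with $\findim A^{op} < \infty$ would then show that $\idim N \le \findim A^{op}$ uniformly for $N \in I^{<\infty}$; in particular $d \le \findim A^{op}$.

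For the reverse inclusion I would reduce to showing $\pd_A D(A) < \infty$: then every injective right $A$-module (being in $\add D(A)$) has finite projective dimension, and splicing with the finite injective coresolution of any $M \in I^{<\infty}$ gives $\pd M < \infty$. To establish $\pd_A D(A) < \infty$, my plan is to exploit that $\pd_{A^{op}} D(A) = d$ (obtained by dualising the coresolution $0 \to A \to I^0 \to \cdots \to I^d \to 0$), that $\Ext^i_A(D(A), A) = 0$ for $i > d$, and both finitistic-dimension bounds. If $\pd_A D(A)$ were infinite, each right-module syzygy $\Omega^n D(A)$ would have infinite projective dimension, finite injective dimension bounded by $\findim A^{op}$ (by induction from $\idim D(A) = 0$ and $\idim P \le d$ for projectives $P$), and would lie in ${}^{\perp} A$ for $n \ge d$ by dimension shifting. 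This step---effectively a Gorenstein symmetry statement under the given hypotheses---is the main obstacle; deriving a contradiction from this family of non-projective modules in ${}^{\perp}A$ with uniformly bounded injective dimension is where the bulk of the work lies and requires combining both finitistic-dimension bounds in an Auslander--Reiten style argument.

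Once $A$ is Gorenstein with Gorenstein dimension $g$, I would conclude $\findim A = \findim A^{op} = g$ as follows. For the upper bound $\findim A \le g$: given $M$ right with $\pd M = n < \infty$, the syzygy $\Omega^g M$ lies in ${}^{\perp} A$ (by dimension shifting against $\idim A = g$); in a Gorenstein algebra ${}^{\perp} A \cap P^{<\infty}$ consists only of projective modules, so $\Omega^g M$ is projective and therefore $n \le g$. The lower bound $\findim A \ge g$ is witnessed by $D(A)$, for which $\pd_A D(A) = \idim {}_A A = g$. The same reasoning applied to $A^{op}$ gives $\findim A^{op} = g$.
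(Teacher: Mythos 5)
There is a genuine gap, and you name it yourself: the entire content of the proposition lies in the step you defer. The statement that $\idim A_A<\infty$ (together with $\findim A<\infty$ and $\findim A^{op}<\infty$) forces $\pd_A D(A)=\idim{}_AA<\infty$ is precisely the Gorenstein-symmetry assertion that the paper does not prove either but imports wholesale from Auslander--Reiten (the cited Proposition 6.10 of \cite{AR}); the remaining claim, that the Gorenstein dimension equals both finitistic dimensions, is the part covered by the citation of \cite{Che}. Your surrounding reductions are fine: $P^{<\infty}\subseteq I^{<\infty}$ on the side where $\idim A_A$ is finite, the reduction of the converse inclusion to $\pd_A D(A)<\infty$, and the final paragraph (using that $\pd M=n<\infty$ implies $\Ext_A^n(M,A)\neq 0$, so finite-projective-dimension modules in ${}^{\perp}A$ are projective, and that $\pd_A D(A)=\idim{}_AA$ witnesses the lower bound) correctly recovers $\findim A=\findim A^{op}=g$ once Gorensteinness is in hand. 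But the core step is not a routine dimension-shifting argument: producing the syzygies $\Omega^nD(A)$ lying in ${}^{\perp}A$ with uniformly bounded injective dimension does not by itself yield a contradiction, and no mechanism is offered for how the two finitistic-dimension hypotheses enter. That this cannot be soft is underlined by the fact that Gorenstein symmetry without the finitistic-dimension hypotheses is a well-known open problem; Auslander and Reiten's proof goes through their theory of (contravariantly finite) resolving subcategories and is genuinely the bulk of the work, so declaring it ``where the bulk of the work lies'' leaves the proposition unproved.

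Two smaller points. First, in your opening inclusion the inequality you quote, $\idim B\le\max(\idim A',\idim C)$ for $0\to A'\to B\to C\to 0$, is the estimate for the middle term, whereas you are resolving $M$ as a cokernel; you need $\idim C\le\max(\idim B,\idim A'+1)$, which still gives the (coarse but sufficient) bound $\idim M\le\pd M+\idim A_A$. Second, if you intend this as a self-contained proof rather than a reduction to the literature, the honest alternative is to do what the paper does: cite \cite[Proposition 6.10]{AR} for the Gorenstein statement and \cite[Lemma 2.3.2]{Che} (or your final paragraph, which is essentially that lemma's proof) for the equality with the finitistic dimensions.
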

\begin{proof}
	See \cite[Proposition 6.10]{AR}.
	That the finitistic dimension coincides with the Gorenstein dimension for Gorenstein algebras can be found for example in \cite[Lemma 2.3.2]{Che}.
\end{proof}

Recall that a pair $(\mathcal{X},\mathcal{Y})$ of subcategories of $\mod A$ is called a \emph{cotorsion pair} when $\mathcal{Y}= \{ M \in \mod A | \Ext_A^1(\mathcal{X},M)=0 \}$ and $\mathcal{X}= \{ N \in \mod A | \Ext_A^1(N,\mathcal{Y})=0 \}$. A cotorsion pair is called \emph{complete} when $\mathcal{X}$ is contravariantly finite. 
The next result gives a correspondence between basic cotilting modules and certain complete cotorsion pairs. 
\begin{theorem} \label{cotiltcorres} 
	Let $A$ be a general finite dimensional algebra.
	There is a one-one correspondence between basic cotilting modules $U$ and complete cotorsion pairs $(\mathcal{X},\mathcal{Y})$ with $\mathcal{X}$ resolving and $\widehat{\mathcal{X}}=\mod A$, given by $U \rightarrow ({}^{\perp}U, \widehat{\add(U)})$ and $(\mathcal{X},\mathcal{Y})$ $\rightarrow $ the direct sum of all indecomposable modules in $\mathcal{X} \cap \mathcal{Y}$.
	
\end{theorem}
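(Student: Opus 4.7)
The plan is to use Auslander--Buchweitz approximation theory to verify both directions of the correspondence, essentially reproducing the classical Auslander--Reiten proof.

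For the forward direction, given a basic cotilting module $U$, I set $\mathcal{X} := {}^{\perp}U$ and $\mathcal{Y} := \widehat{\add(U)}$. First I would verify that $\mathcal{X}$ is resolving: it contains the projectives (no higher Ext into anything), is closed under summands and extensions by the long exact sequence of $\Ext$, and is closed under kernels of epimorphisms because $U$ has no self-extensions in positive degrees. Next, I would show that $(\mathcal{X},\mathcal{Y})$ is a cotorsion pair. One containment $\Ext_A^1(\mathcal{X},\mathcal{Y})=0$ follows by induction on the length of an $\add(U)$-coresolution from $\Ext_A^{\geq 1}(\mathcal{X},U)=0$. The reverse containments require the approximation sequences below.

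The identity $\widehat{\mathcal{X}} = \mod A$ is the heart of the matter. The key input is that $D(A)\in \mathcal{Y}$ (by definition of cotilting) and that every module has a finite injective coresolution by modules in $\add(DA)\subseteq \mathcal{Y}$; together with the fact that $\widehat{\add U}$ is closed under extensions and that $\add(U)\subseteq \mathcal{X}$ is resolving, one obtains for every $M\in\mod A$ a short exact sequence $0 \to Y_M \to X_M \to M \to 0$ with $X_M\in \mathcal{X}$ and $Y_M\in \mathcal{Y}$. Iterating this construction and using finiteness of injective dimension of $U$ produces a finite $\mathcal{X}$-resolution of $M$, proving both $\widehat{\mathcal{X}}=\mod A$ and that the right approximations $X_M \to M$ witness contravariant finiteness of $\mathcal{X}$, hence completeness of the cotorsion pair. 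The remaining closure property $\mathcal{Y} \subseteq \{M : \Ext^1(\mathcal{X},M)=0\}^{c}$ and the opposite inclusion are then obtained by applying $\Hom_A(-,M)$ to these approximations and using that $\mathcal{X} \cap \mathcal{Y} = \add(U)$.

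For the backward direction, given such a cotorsion pair $(\mathcal{X},\mathcal{Y})$, let $U$ be the direct sum of one copy of each indecomposable in $\omega := \mathcal{X}\cap \mathcal{Y}$. Then $\Ext_A^{\geq 1}(U,U)=0$ since $U\in \mathcal{X}$ and $U\in \mathcal{Y}$, using dimension shifting through finite $\mathcal{X}$-resolutions (which exist because $\widehat{\mathcal{X}}=\mod A$). Applying the approximation sequences for $DA$ and iterating, one shows $DA\in \widehat{\omega}=\widehat{\add(U)}$, and simultaneously that $U$ has finite injective dimension (bounded by the length of an $\mathcal{X}$-resolution of $DA$). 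Hence $U$ is cotilting.

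Finally I would check the two constructions are mutually inverse: starting from $U$, the indecomposables in ${}^{\perp}U \cap \widehat{\add(U)}$ are precisely the indecomposable summands of $U$ by a standard Ext-vanishing argument on the $\add(U)$-coresolution; starting from $(\mathcal{X},\mathcal{Y})$, one recovers $\mathcal{X}={}^{\perp}U$ and $\mathcal{Y}=\widehat{\add(U)}$ from the approximation sequences. I expect the main obstacle to be the careful bookkeeping in showing $\widehat{\mathcal{X}}=\mod A$ together with contravariant finiteness from the cotilting hypothesis, which is exactly where Auslander--Buchweitz theory does the heavy lifting; everything else is formal manipulation of Ext and cotorsion pairs.
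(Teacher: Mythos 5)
The paper itself gives no argument for this statement --- it simply cites \cite[2.3 (b)]{Rei} --- so what you are really reconstructing is the underlying Auslander--Reiten/Auslander--Buchweitz correspondence, and your overall strategy is the right one. However, there is a genuine gap exactly at the step you call the heart of the matter. You justify the existence of the sequences $0 \to Y_M \to X_M \to M \to 0$ for \emph{every} $M$, and hence $\widehat{\mathcal{X}}=\mod A$, by the claim that ``every module has a finite injective coresolution by modules in $\add(DA)\subseteq\mathcal{Y}$''. This is false over a general finite dimensional algebra: a finite coresolution by injectives exists if and only if the module has finite injective dimension, and already for $K[x]/(x^2)$ the simple module has infinite injective dimension. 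Moreover, Auslander--Buchweitz theory produces the special approximation sequences only for modules already known to lie in $\widehat{\mathcal{X}}$, so deducing $\widehat{\mathcal{X}}=\mod A$ from the existence of these sequences for all $M$ is circular as written.

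The standard repair is a syzygy argument rather than a coresolution argument: if $r$ is the injective dimension of the cotilting module $U$, then for any $M$ one has $\Ext_A^i(\Omega^r(M),U)\cong \Ext_A^{i+r}(M,U)=0$ for all $i\geq 1$, so $\Omega^r(M)\in {}^{\perp}U$ and the truncated projective resolution $0\to \Omega^r(M)\to P_{r-1}\to\cdots\to P_0\to M\to 0$ is a finite $\mathcal{X}$-resolution; this gives $\widehat{\mathcal{X}}=\mod A$ (with resolution dimension at most $r$) \emph{first}, and only then does Auslander--Buchweitz theory, with $\omega=\add(U)$ as injective cogenerator of ${}^{\perp}U$ (which itself needs the exact sequences $0\to X\to U_0\to X'\to 0$ obtained from the finite $\add(U)$-resolution of an injective envelope of $X$), deliver the approximations and the contravariant finiteness. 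A smaller slip of the same kind occurs in the backward direction: the injective dimension of $U$ is not bounded by the length of an $\mathcal{X}$-resolution of $D(A)$; one bounds it by the maximum of the $\mathcal{X}$-resolution dimensions of the simple modules, using dimension shifting against $U\in\mathcal{Y}$, and one should also justify that $\mathcal{X}\cap\mathcal{Y}$ contains only finitely many indecomposables before declaring $U$ to be a basic cotilting module. With these corrections your outline does match the argument behind the cited result of Reiten.
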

\begin{proof}
	See \cite[2.3 (b)]{Rei}.
\end{proof}

\subsection{Standardly and properly stratified algebras}

For the convenience of the reader, we will recall some definitions and elementary properties involving stratified algebras.

Let $\textbf{e}=\{e_1, \ldots, e_n\}$ be a linearly ordered complete set of primitive orthogonal idempotents. For each $i=1, \ldots, n$, we write \begin{align*}
	P(i)=e_iA, \ S(i)=\top (e_iA), \ I(i)=D(Ae_i), \ \St(i)=P(i)/e_i A(e_{i+1}+\cdots+ e_n)A.
\end{align*}
That is, $\St(i)$ is the maximal quotient of $P(i)$ without composition factors $S(j)$, $j>i$. Define $\pSt(i)$ to be the maximal quotient of $\St(i)$ having only once $S(i)$ as composition factor.
We call $\St=\{\St(1), \ldots, \St(n)\}$ the (right) standard modules of $A$ and $\pSt=\{\pSt(1), \ldots, \pSt(n) \}$ the proper standard modules. Let $\St_{op}$ and $\pSt_{op}$ be the standard and proper standard modules of $A^{op}$, respectively. By the costandard and proper costandard (right) modules of $A$ we mean $\Cs(i)=D\St_{op}(i)$ and $\pCs(i)=D\pSt_{op}(i)$.

For a given set of modules $\Theta$, by $\F(\Theta)$ we mean the full subcategory of $\mod A$ having a filtration by the modules in the set $\Theta$.  

\begin{definition}
	An algebra together with a linearly ordered complete set of orthogonal primitive idempotents $(A, \textbf{e})$ is called 
	\begin{itemize}
		\item \emph{standardly stratified} if $A_A\in \F(\St)$;
		\item \emph{properly stratified} if $A_A\in \F(\St)\cap \F(\pSt)$.
	\end{itemize} 
\end{definition}

Note that $A_A\in \F(\pSt)$ if and only if $DA_A\in \F(\pCs_{op})$. This last condition is equivalent to $A^{op}$ being standardly stratified (see \cite[Theorem 1.1]{ADLU}, \cite[2.2]{ADL} and \cite{Lak}).

The next theorem collects some results on properly stratified algebras:
\begin{theorem} \label{propstratresults}
Let $(A, \textbf{e})$ be a properly stratified algebra. Then the following assertions hold:
\begin{enumerate}[label=\upshape(\alph*),ref=\thetheorem (\alph*)]
	\item $\F(\St)=\{M\in \mod A | \Ext_A^1(M, \F(\pCs))=0 \}$ is a resolving subcategory of $\mod A$.
	\item $\F(\pCs)= \{N\in \mod A | \Ext_A^1(\F(\St), N)=0 \}$ is a coresolving subcategory of $\mod A$.
\item $\F(\Delta) \subseteq P^{< \infty}$ and $\F(\nabla) \subseteq I^{< \infty}$. \label{propstratresults1}
\item $\Ext_A^k(\F(\St),\F(\pCs))=0$ for all $k \geq 1$. \label{propstratresults2}
\item There exists a (unique) tilting module $T$ such that $\F(\pCs)=T^{\perp}$ and $\widecheck{\F(\pCs)}=\mod A$. \label{propstratresultse}
\item There exists a unique basic tilting module $T$ such that $\add T=\F(\St)\cap \F(\pCs)$.\label{propstratresultsf}
\item There exists a (unique) cotilting module $C$ such that $\F(\pSt)={}^{\perp}C$ and $\widehat{\F(\pSt)}=\mod A$. \label{propstratresultsg}
\item There exists a unique basic tilting module $C$ such that $\add C=\F(\pSt)\cap \F(\Cs)$. \label{propstratresultsh}
\item $T^{\perp} \subseteq Fac(T)$. \label{propstratresults3}
\end{enumerate}
\end{theorem}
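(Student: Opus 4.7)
The plan is to exhibit, for an arbitrary $M \in T^{\perp}$, a module $X \in \add T$ surjecting onto $M$. The key ingredients are already listed in the theorem: by parts (a) and (b), $(\F(\St),\F(\pCs))$ is a cotorsion pair, and by part (f), $\add T = \F(\St)\cap\F(\pCs)$.

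First I would argue that the cotorsion pair $(\F(\St),\F(\pCs))$ is complete: for every $M \in \mod A$ there exists a short exact sequence
$$0 \to Y \to X \to M \to 0$$
with $X \in \F(\St)$ and $Y \in \F(\pCs)$. This amounts to the contravariant finiteness of $\F(\St)$ in $\mod A$, which is classical for properly stratified algebras; it follows for instance from Ringel-style results asserting that the extension closure of a finite family of modules is functorially finite, or alternatively by deriving it directly from the existence of the tilting module $T$ supplied by part (e).

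With such an approximation in hand, specialise to $M \in T^{\perp} = \F(\pCs)$ and apply the construction. Since both $Y$ and $M$ lie in $\F(\pCs)$ and $\F(\pCs)$ is coresolving (and hence closed under extensions) by (b), the middle term $X$ likewise lies in $\F(\pCs)$. Combined with $X\in\F(\St)$, part (f) forces $X\in\F(\St)\cap\F(\pCs)=\add T$, and the surjection $X\twoheadrightarrow M$ then witnesses $M\in\Fac(T)$.

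The main obstacle I anticipate is the completeness of the cotorsion pair; everything subsequent is a short bookkeeping step using closure of $\F(\pCs)$ under extensions and the intersection formula for $\add T$. However, contravariant finiteness of $\F(\St)$ for properly stratified algebras is standard in the literature, so I expect no serious difficulty there.
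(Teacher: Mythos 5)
Your argument addresses only part (i), taking (a), (b), (e), (f) as given; that is consistent with the paper, which proves none of the nine assertions directly but handles all of them by citation (for (i) it simply points to page 151 of the ADLU reference). So for the one part you actually argue, you give a genuine proof where the paper gives none, and the proof is correct: completeness of the cotorsion pair $(\F(\St),\F(\pCs))$ is indeed available for properly stratified algebras (the paper itself records this right after the theorem, citing Reiten's Theorem 3.6; it also follows from contravariant finiteness of $\F(\St)$ plus Wakamatsu's lemma, using (a) and (b) to identify the Ext-orthogonals), and then your bookkeeping is exactly right: for $M\in T^{\perp}=\F(\pCs)$ take a special sequence $0\to Y\to X\to M\to 0$ with $X\in\F(\St)$, $Y\in\F(\pCs)$; closure of $\F(\pCs)$ under extensions puts $X$ in $\F(\St)\cap\F(\pCs)=\add T$ by (f), so $M\in\Fac(T)$. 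Two small points you should make explicit: first, the approximation $X\to M$ is surjective because $\F(\St)$ is resolving and hence contains the projectives, so the projective cover of $M$ factors through any right $\F(\St)$-approximation (your phrasing quietly assumes the special precover is epi); second, your alternative suggestion of deducing contravariant finiteness ``directly from the existence of the tilting module $T$ in (e)'' is circular-flavoured and should be dropped in favour of the standard Ringel/ADLU functorial-finiteness result, which is what both you and the paper ultimately rely on. Finally, be aware that as a proof of the full theorem your text is incomplete: assertions (a)--(h) are substantial statements (existence and uniqueness of the characteristic tilting and cotilting modules, the Ext-orthogonality descriptions, the finiteness of projective and injective dimensions of $\F(\Delta)$ and $\F(\nabla)$) which you use but do not establish; the paper disposes of them by references to ADLU, ADL, PR and Xi, and any self-contained write-up would have to do the same or reproduce those arguments.
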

\begin{proof}
Assertions (a) and (b) 	are the content of Theorem 1.6 of \cite{ADLU}. For (c), see \cite[Proposition 1.3]{PR}. Assertion (d) follows from $A^{op}$ being standardly stratified and from \cite[Theorem 3.1]{ADL}.
For assertions (e) and (f) see Theorem 2.1 and Proposition 2.2 of \cite{ADLU}. See also Theorem 3.3 of \cite{Xi}.
Assertions (g) and (h) follow from assertions (e) and (f) since $A^{op}$ is standardly stratified.
 For (i), see \cite{ADLU} on page 151.
\end{proof}

We call the tilting module $T$ in the conditions of Theorem $\ref{propstratresultse}$ and $\ref{propstratresultsf}$ the \emph{characteristic tilting module} of the properly stratified algebra  $(A, \textbf{e})$. Dually, we call the cotilting module $C$ in the conditions $\ref{propstratresultsg}$ and $\ref{propstratresultsh}$ the \emph{characteristic cotilting module} of the properly stratified algebra  $(A, \textbf{e})$.
By the previous theorem we have that $\F(\St)= \widecheck{\add T}$, $\F( \pCs)=T^{\perp}$ and $(\F(\St), \F( \pCs))$ is a complete cotorsion pair, see also \cite[Theorem 3.6]{Rei} for this and the dual result.

The following lemma although being elementary and in some sense folklore in the literature of stratified algebras will be useful afterwards to characterize Gorenstein properly stratified algebras.

\begin{lemma}\label{filtrations of standard in proper standard}
	Let $(A, \textbf{e})$ be a standardly stratified algebra. Then the following assertions hold.
	\begin{enumerate}[(1)]
		\item $Ae_nA$ is projective as right $A$-module and $(A/Ae_nA, \{\underline{e_1}, \ldots, \underline{e_{n-1}}\})$ is standardly stratified, where $\underline{e}$ is the image of $e$ in the quotient $A/Ae_nA$;
		\item $Ae_n$ is a projective generator of $e_nAe_n$;
	\end{enumerate}
Moreover, if $(A, \textbf{e})$ is properly stratified then  $\St(i)\in \F(\pSt(i))$, for each $i=1, \ldots, n$.
\end{lemma}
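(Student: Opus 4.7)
The proof splits naturally along the three assertions. For (1), I exploit that $\St(n) = P(n) = e_nA$ is projective (there is no index above $n$) and that each $\St(j)$ with $j < n$ has composition factors only among $S(1), \ldots, S(j)$, so that $\Hom_A(P(n), \St(j)) = 0$. These two facts, combined with any $\St$-filtration of $P(i)$, let me inductively split off every $\St(n)$-layer at the bottom as a direct summand, showing that the trace $e_iAe_nA = \tau_{P(n)}(P(i))$ is a direct sum of copies of $P(n)$. Summing over $i$ gives $Ae_nA \cong P(n)^m$, which is therefore projective; the quotient $A/Ae_nA$ inherits a filtration with factors $\St(j)$ for $j < n$, which one verifies coincide with the standard modules of $(A/Ae_nA, \{\underline{e_1}, \ldots, \underline{e_{n-1}}\})$.

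For (2), the right $A$-module isomorphism $Ae_nA \cong (e_nA)^m$ from (1), after right multiplication by $e_n$, becomes $Ae_n \cong (e_nAe_n)^m$ as right $e_nAe_n$-modules; since $e_n$ is primitive, $e_nAe_n$ is local, so a free module of rank $m \geq 1$ is automatically a projective generator. For the moreover part, I first argue that $A^* := A/Ae_{>i}A$ is itself properly stratified. Iterating (1) gives $A^*$ standardly stratified, and the $\pSt$-filtration of $A_A$ descends to $A^*$ via the quotient map: any layer $\pSt(j)$ with $j > i$ has $S(j)$ as top (and $j \in \{i+1,\ldots,n\}$), so $\pSt(j) = \pSt(j) \cdot e_j \cdot A \subseteq \pSt(j) \cdot Ae_{>i}A$ forces the descended layer to vanish, while layers with $j \leq i$ are already annihilated by $Ae_{>i}A$ and coincide with $\pSt_{A^*}(j)$.

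Write $R_i := \underline{e_i}A^*\underline{e_i} \cong \End_A(\St(i))^{op}$, which is local because $\St(i) = \underline{e_i}A^*$ is indecomposable. The crucial step is to show $\St(i)$ is free as a left $R_i$-module. Granting this, the radical filtration of $R_i$ induces a filtration of $\St(i)$ by right $A$-submodules $(\rad R_i)^l \St(i)$, and by flatness
\[
(\rad R_i)^l \St(i)/(\rad R_i)^{l+1}\St(i) \cong \bigl((\rad R_i)^l/(\rad R_i)^{l+1}\bigr) \otimes_{R_i} \St(i) \cong \pSt(i)^{d_l}
\]
with $d_l = \dim_k (\rad R_i)^l/(\rad R_i)^{l+1}$. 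Refining along a $k$-basis of each $(\rad R_i)^l/(\rad R_i)^{l+1}$ yields a filtration of $\St(i)$ all of whose factors are isomorphic to $\pSt(i)$, so $\St(i) \in \F(\pSt(i))$. The main obstacle is establishing the freeness of $\St(i)$ over $R_i$: part (2) as stated delivers only the right-module projective-generator assertion for $A^*\underline{e_i}$, and the symmetric left-module statement requires passing to $(A^*)^{op}$ (which is standardly stratified precisely because $A^*$ is properly stratified) and rerunning the trace-projectivity argument of (1) from the opposite side.
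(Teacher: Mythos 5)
Your parts (1) and (2) go through (one wording caveat: the bottom $\St(n)$-layers of $P(i)$ form a submodule isomorphic to $P(n)^{m_i}$, because extensions among projectives split, but this submodule -- the trace -- need not be a direct summand of $P(i)$; your conclusion is unaffected). The genuine gap is in the ``moreover'' part, in your claim that the $\pSt$-filtration of $A_A$ descends to $A^{*}=A/Ae_{>i}A$ with the layers of index $j\le i$ surviving intact. Passing a filtration $0=F_0\subset\cdots\subset F_t=A$ to the quotient produces factors $F_k/(F_{k-1}+F_k\cap Ae_{>i}A)$, i.e.\ quotients of $\pSt(j_k)$; the fact that $\pSt(j_k)$ is annihilated by $Ae_{>i}A$ only gives $F_k\cdot Ae_{>i}A\subseteq F_{k-1}$, not $F_k\cap Ae_{>i}A\subseteq F_{k-1}$, so collapsing of the low-index layers is not ruled out and ``coincide with $\pSt_{A^{*}}(j)$'' is unjustified as stated. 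Since this is precisely your source for $(A^{*})^{op}$ being standardly stratified, which the freeness step requires, the chain breaks as written. Two standard repairs: either first reorder the $\pSt$-filtration using $\Ext^1_A(\pSt(a),\pSt(b))=0$ for $a>b$ so that all factors of index $>i$ sit at the bottom -- that bottom piece is then exactly $Ae_{>i}A$ and the top factors survive unchanged -- or, simpler and already in your toolkit, use that $A$ properly stratified makes $A^{op}$ standardly stratified and iterate your part (1) for $A^{op}$, which yields $(A/Ae_{>i}A)^{op}$ standardly stratified directly, with no descent of the $\pSt$-filtration needed.

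With that repaired, your route is correct and genuinely different from the paper's. The paper proves the ``moreover'' statement by noting that $\F(\pSt)$ is resolving (so $\St(n)=P(n)\in\F(\pSt)$), reordering a $\pSt$-filtration by the same Ext-vanishing so that the lowest index appears at the top, using that the simple top $S(n)$ of $\St(n)$ forces this index to be $n$, and then inducting through the quotient algebras; its proof of (1) likewise runs through the resolving property of $\F(\St)$ rather than your direct trace computation. You instead establish that $\St(i)$ is free over the local ring $R_i=\underline{e_i}A^{*}\underline{e_i}$ by applying part (2) to $(A^{*})^{op}$, and then read off a $\pSt(i)$-filtration from the radical filtration of $R_i$ via $\St(i)/\rad(R_i)\St(i)\cong\pSt(i)$; this is essentially Dlab's freeness characterisation and buys the stronger structural fact that $\St(i)$ is a free module over $\End_A(\St(i))$, at the cost of first needing the opposite-side quotient statement. (Minor point: $d_l$ should be the dimension over the division algebra $R_i/\rad R_i$ rather than over $k$, but either way the subquotients are direct sums of copies of $\pSt(i)$, so the conclusion stands.)
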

\begin{proof}
	Taking into account that each $\St(j)$ with $j\neq n$ belongs to $\mod A/Ae_nA$ and filtrations of $A$ can be chosen so that higher indexes appear at the bottom, $A$ having a filtration by standard modules gives that $A/Ae_nA\in \F(\St(i)_{i\neq n})$ (see for example \cite[Lemma A2.2]{DK}).
	
	Since $\F(\St)$ is a resolving subcategory of $\mod A$, the exact sequence
	\begin{align*}
		0\rightarrow Ae_nA\rightarrow A\rightarrow A/Ae_nA\rightarrow 0
	\end{align*}yields that $Ae_nA\in \F(\St)$. Since $Ae_nA\in Fac(e_nA)$ and all $\St(j)$ have no composition factors of the form $S(n)$ we must have $Ae_nA\in \F(\St(n))$. It follows that $Ae_nA$ is projective and $\add Ae_nA=\add e_nA$. In particular, $\Hom_A(e_nA, A/Ae_nA)=0$. This shows (1).

Write $e=e_n$. Since $AeA$ is projective we can write $Ae\otimes_{eAe}eA\cong AeA$. Therefore,
\begin{align*}
\add_{eAe}	Ae=\add_{eAe} Ae\otimes_{eAe}eAe=\add_{eAe} AeAe=\add_{eAe} \Hom_A(eA, AeA)= \add_{eAe} \End_A(eA).
\end{align*}This shows (2).

Assume now that $(A, \textbf{e})$ is properly stratified. Then $(A^{op}, \textbf{e})$ is standardly stratified and $\F(\pCs_{op})$ is a coresolving subcategory of $\mod A^{op} $ (see \cite[Theorem 1.6]{ADLU}).
Thus, $\F(\pSt)$ is a resolving subcategory of $\mod A$. Therefore, $\St(n)\in \F(\pSt)$. Further, since $\Ext^1(\pSt(i), \pSt(j))=0$, for $i>j$ (see \cite[Lemma 1.2]{ADLU} ) we can rearrange, if necessary, the filtration of modules in $\F(\pSt)$ so that proper standard modules with lower index appear in the top of the filtration. So, there exists a surjective map $\St(n)\rightarrow \pSt(j)$, where $j$ is the lowest index in the filtration of $\St(n)$. Then $S(n)=\top \St(n)\rightarrow \top \pSt(j)=S(j)$ is surjective. So $j$ must be $n$. Hence, $\St(n)\in \F(\pSt(n))$. Since each $\pSt(j)$ has no composition factors of the form $S(i)$ with $i>j$ the last claim follows by induction.
\end{proof}

\begin{proposition} \label{extlemmaidemideal}
	Let $A$ be a finite dimensional algebra with an idempotent $e$ such that the ideal $AeA$ is projective as a right $A$-module.
	Then $\Ext_{A/AeA}^i(M,N)=\Ext_A^i(M,N)$ for all $M,N \in \mod A/AeA$.
\end{proposition}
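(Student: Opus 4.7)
The plan is to prove this via change-of-rings by showing that a projective resolution of $M$ in $\mod A/AeA$ is already $\Ext_A(-,N)$-acyclic for any $N \in \mod A/AeA$, and then identify the Hom groups that compute the derived functors.

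The key reduction is to establish: $(\ast)$ for every $N \in \mod A/AeA$ and every $i \geq 1$, we have $\Ext_A^i(A/AeA, N) = 0$. To see this, I apply $\Hom_A(-,N)$ to the short exact sequence
\begin{equation*}
0 \to AeA \to A \to A/AeA \to 0.
\end{equation*}
Since $AeA$ is projective as a right $A$-module by hypothesis, $\Ext_A^i(AeA,N) = 0$ for all $i \geq 1$, and of course $\Ext_A^i(A,N) = 0$ as well. The resulting long exact sequence then yields $\Ext_A^i(A/AeA,N) \cong \Ext_A^{i-1}(AeA,N) = 0$ for $i \geq 2$, while for $i=1$ it suffices to note that $\Hom_A(AeA,N) = 0$: any $A$-linear $g \colon AeA \to N$ satisfies $g(xe) = g(xe)\cdot e \in Ne = 0$ because $e$ annihilates $N$. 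Taking direct summands, $(\ast)$ extends to $\Ext_A^i(Q,N) = 0$ for every projective $A/AeA$-module $Q$ and every $N \in \mod A/AeA$.

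Now choose a projective resolution $\cdots \to Q_1 \to Q_0 \to M \to 0$ in $\mod A/AeA$. By $(\ast)$ this is an $\Ext_A(-,N)$-acyclic resolution of $M$, hence it computes $\Ext_A^i(M,N)$ as the cohomology of $\Hom_A(Q_\bullet, N)$. Since $Q_\bullet$ and $N$ are $A/AeA$-modules we have $\Hom_A(Q_i,N) = \Hom_{A/AeA}(Q_i,N)$ naturally, so the same complex computes $\Ext_{A/AeA}^i(M,N)$, and the desired isomorphism follows.

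The only step requiring a small observation is the vanishing $\Hom_A(AeA,N) = 0$ for $N \in \mod A/AeA$; everything else is the standard dimension-shift / acyclic-resolution argument. There is no substantive obstacle here, as the projectivity of $AeA$ as a right $A$-module is precisely the hypothesis that makes $A/AeA$ sit in a very short projective resolution over $A$ and forces the needed Ext-vanishing.
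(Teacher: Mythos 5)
Your proof is correct. The paper itself gives no argument for this proposition: it simply cites \cite[Example 1]{APT} (Auslander--Platzeck--Todorov's work on idempotent ideals), and your argument is essentially the standard one underlying that reference. The two key points are exactly the ones you isolate: the vanishing $\Hom_A(AeA,N)=0$ for $N\in\mod A/AeA$ (since $AeA$ is generated as a right $A$-module by the elements $xe$ and $g(xe)=g(xe)e\in Ne=0$), which together with the projectivity of $AeA$ gives $\Ext_A^i(A/AeA,N)=0$ for all $i\geq 1$; and the observation that a projective $A/AeA$-resolution of $M$ is therefore $\Hom_A(-,N)$-acyclic, so it computes $\Ext_A^*(M,N)$ while literally being the complex computing $\Ext_{A/AeA}^*(M,N)$. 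If you prefer to avoid invoking the general acyclic-resolution principle, the same conclusion follows by comparing the two long exact sequences attached to $0\to K\to Q_0\to M\to 0$ and dimension shifting, but as written the argument is complete.
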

\begin{proof}
	See for example \cite[Example 1]{APT}.
\end{proof}

\begin{theorem} \label{finitisticbound}
Let $(A, \textbf{e})$ be a standardly stratified algebra. Then $A$ has finite finitistic dimension.
\end{theorem}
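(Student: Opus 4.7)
I plan to prove the theorem by induction on $n$, the number of simple $A$-modules; the case $n = 0$ is trivial. For the inductive step, set $e = e_n$. Lemma \ref{filtrations of standard in proper standard} provides the essential ingredients: part (1) says that $AeA$ is projective as a right $A$-module and $B := A/AeA$ is standardly stratified with $n-1$ simples, so $\findim B < \infty$ by the induction hypothesis; part (2) says that $Ae$ is a projective generator of $eAe$.

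Given $M \in \mod A$ with $\pdim_A M < \infty$, I decompose $M$ via the trace of $P(n) = eA$:
\begin{equation*}
0 \to MeA \to M \to M/MeA \to 0,
\end{equation*}
with $MeA \in \Fac(eA)$ and $M/MeA \in \mod B$, and aim to bound $\pdim_A$ of each piece. For the cokernel $M/MeA$, Proposition \ref{extlemmaidemideal} identifies $\Ext^i_A$ with $\Ext^i_B$ on $\mod B$, so $\pdim_B(M/MeA) \leq \findim B$; combining this with $\pdim_A B \leq 1$ (visible from the short exact sequence $0 \to AeA \to A \to B \to 0$ whose outer terms are projective) and a standard change-of-rings argument, one obtains $\pdim_A(M/MeA) \leq \findim B + 1$.

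The main obstacle is bounding $\pdim_A(MeA)$. The key tool is Lemma \ref{filtrations of standard in proper standard}(2): since $Ae$ is a projective generator of $eAe$, the exact Schur functor $(-)e \colon \mod A \to \mod eAe$ and its exact adjoint $- \otimes_{eAe} Ae$ allow a comparison of $\pdim_A(MeA)$ with $\pdim_{eAe}(Me)$ up to a bounded constant. The latter is in turn controlled via a secondary induction on an invariant of $eAe$ inherited from the stratification, or by iterating the trace-decomposition argument one level down. Combining the two bounds through the long exact sequence from the decomposition yields a uniform bound on $\pdim_A M$ in terms of invariants of $A$ alone, hence $\findim A < \infty$.
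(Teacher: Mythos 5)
The reduction of the cokernel term is essentially fine, but the kernel term is where your argument has a genuine gap: nothing in the proposal actually bounds $\pd_A(MeA)$, and the mechanism you suggest cannot do so. The adjoint pair $((-)e,\,-\otimes_{eAe}eA)$ only controls modules \emph{induced} from $eAe$; the trace $MeA$ is merely the image of the counit $Me\otimes_{eAe}eA\rightarrow M$, whose kernel is $\operatorname{Tor}_1^A(M,A/AeA)$, and this is not controlled by the hypotheses: $AeA$ is projective as a \emph{right} $A$-module only, so the left projective dimension of $A/AeA$ may be infinite, and $eA$ need not be flat as a left $eAe$-module, so the induction functor need not even be exact (Lemma \ref{filtrations of standard in proper standard}(2) concerns $Ae$ on the other side; it says that $(-)e$ preserves projectives, not that $-\otimes_{eAe}eA$ is exact). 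Moreover, the $eAe$-side data are trivial here: $e=e_n$ is primitive, so $eAe$ is local with finitistic dimension zero, and if $\pd_A M<\infty$ then applying the exact functor $(-)e$ to a finite projective resolution and using Lemma \ref{filtrations of standard in proper standard}(2) shows $Me$ is already projective over $eAe$; hence any inequality $\pd_A(MeA)\le \pd_{eAe}(Me)+c$ puts all the content into the constant $c$. A concrete test: let $A$ be the Auslander algebra of $K[x]/(x^2)$ with its standard quasi-hereditary (hence standardly stratified) order, $e=e_n=e_2$ and $M=S(2)=\top(e_2A)$. Then $\pd_A M=2=\findim A$, $eAe\cong K$, $Me$ is $eAe$-projective, yet $MeA=M$ and $M/MeA=0$: the trace decomposition is vacuous, bounding $\pd_A(MeA)$ is literally the original problem, and since $\findim(A/Ae_2A)=0$, any scheme of the form $\findim A\le\max\{c,\findim(A/Ae_nA)+1\}$ with $c$ extracted from $eAe$ fails already here.

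This is not an incidental technicality. The classical version of your induction works for heredity ideals (giving $\gldim A\le\gldim(A/AeA)+2$) precisely because there $AeA$ is projective on both sides, which kills $\operatorname{Tor}_1^A(\Omega M,A/AeA)$ and makes the counit injective after one syzygy, and because $eAe$ is semisimple; for a one-sided standardly stratified algebra both ingredients are missing, and your fallbacks do not replace them: $eAe$ is local, so there is no ``secondary induction on an invariant of $eAe$ inherited from the stratification'', and iterating the trace decomposition reduces nothing in the example above, where $MeA=M$. (For \emph{properly} stratified algebras the two-sided projectivity of $Ae_nA$ would make a syzygy-corrected version of your argument feasible, but the theorem is stated for standardly stratified algebras.) Note also that the paper does not prove this statement by any such reduction: it simply cites \cite[Corollary 2.7]{ADLU}, where finiteness of the finitistic dimension is obtained from the theory of $\F(\St)$ and its characteristic tilting module rather than by induction on idempotents; if you want a self-contained proof, that is the route to pursue.
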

\begin{proof}
See \cite[Corollary 2.7]{ADLU}.
\end{proof}

Let $A$ be a properly stratified algebra with the associated subcategory $\mathcal{F}(\Delta^A)$ and $B$ a properly stratified algebra with the associated subcategory $\mathcal{F}(\Delta^B)$. Then $A$ and $B$ are said to be \emph{equivalent} as properly stratified algebras when $\mathcal{F}(\Delta^A)$ and $\mathcal{F}(\Delta^B)$ are exact equivalent. When $T$ is the characteristic tilting module of $A$ and $R_A:=\End_A(T)$ the \emph{Ringel dual} of $A$ (which is also properly stratified by \cite[Theorem 3.7]{Rei}), then $A$ is said to be \emph{Ringel self-dual} if $A$ and $R_A$ are equivalent as properly stratified algebras.
This generalises the classical notion of being Ringel self-dual for quasi-hereditary algebras, see for example \cite[Seciton 5]{CE}.

\subsection{Standardly stratified algebras with a duality}

The algebra $(A, \textbf{e})$ is said to have a \emph{duality} $\iota$ if $\iota$ is an anti-automorphism of $A$ fixing the complete set of primitive orthogonal idempotents $\textbf{e}$ and $\iota^2=\id_A$. 
Algebras with a duality were studied in \cite{FanKoe3}. The existence of the duality implies the existence of exact functors $(-)^\iota\colon \mod A\rightarrow A\lmod$ and ${}^\iota(-)\colon A\lmod\rightarrow \mod A$ satisfying, in particular, $(e_iA)^\iota\cong Ae_i$ and ${}^\iota(M^\iota)\cong M$, for every $M\in \mod A$. Denote by $(-)^\natural\colon \mod A\rightarrow \mod A$ the composition of functors $D\circ (-)^\iota$. In addition, assume in the remaining of this subsection that $(A, \textbf{e})$ is a standardly stratified algebra. Then $P(i)^\natural\cong D(Ae_i)=I(i)$ and $S(i)^\natural\cong S(i)$. So, $(-)^\natural$ is  a simple preserving duality of $A$. Moreover,  $\St(i)^\natural\cong \Cs(i)$ and $\pSt(i)^\natural\cong \pCs(i)$, for all $i$. Applying $D$ to the isomorphism $\St(i)^\natural\cong \Cs(i)$ we obtain $\St(i)^\iota\cong D\Cs(i)\cong \St_{op}(i)$. It follows that ${}_A A\cong (A_A)^\iota\in \F(\St^\iota)=\F(\St_{op})$. Therefore, a standardly stratified algebra $(A, \textbf{e})$ with a duality is a properly stratified algebra (see also \cite[Proposition 2.3]{CheDl}).

\begin{theorem} \label{MazOvresult}
	Let $(A, \textbf{e})$ be a properly stratified algebra having a duality with characteristic tilting module $T$ coinciding with the characteristic cotilting module.
	Then $\findim(A)= 2 \pd(T)$.
\end{theorem}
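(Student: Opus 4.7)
The plan is to use Theorem~\ref{mainresult} to reduce $\findim(A)$ to the Gorenstein dimension of $A$, and then to compute that as $2d$ by combining a tilting coresolution of $A$ with its image under the duality $(-)^{\natural}$. By Theorem~\ref{mainresult}, the hypothesis that $T$ coincides with the characteristic cotilting module implies that $A$ is Gorenstein. Since $A$ and $A^{op}$ are both standardly stratified, Theorem~\ref{finitisticbound} gives $\findim(A),\findim(A^{op})<\infty$, and Proposition~\ref{auslander-reitenfindimresult} then yields $\findim(A)=\idim(A)=:g$, the Gorenstein dimension. It thus remains to prove $g=2d$, where $d:=\pd(T)$.

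The next step is to exploit the duality $(-)^{\natural}:=D\circ (-)^{\iota}$, a contravariant self-equivalence of $\mod A$ that sends $P(i)\leftrightarrow I(i)$, $\Delta(i)\leftrightarrow \nabla(i)$ and $\overline{\Delta}(i)\leftrightarrow \overline{\nabla}(i)$. Combining this with $\add T=\F(\Delta)\cap \F(\overline{\nabla})$ (Theorem~\ref{propstratresultsf}) and $\add C=\F(\overline{\Delta})\cap \F(\nabla)$ (Theorem~\ref{propstratresultsh}) shows that $(-)^{\natural}$ sends the characteristic tilting module to the characteristic cotilting module~$C$; the hypothesis $T=C$ then forces $T^{\natural}\cong T$. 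Consequently $\pd(T)=\idim(T)=d$, and likewise $A^{\natural}\cong DA$ yields $\pd(DA)=\idim(A)=g$.

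For the upper bound $g\le 2d$: since $T$ is tilting of projective dimension $d$, the module $A\in \widecheck{\add T}$ admits an $\add T$-coresolution $0\to A\to T^0\to T^1\to\cdots\to T^k\to 0$ with $k\le d$, and every $T^i$ satisfies $\idim(T^i)\le \idim(T)=d$. Iterated dimension-shifting along the short exact sequences extracted from this coresolution then gives $\idim(A)\le d+k\le 2d$.

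The main obstacle is the lower bound $g\ge 2d$. Applying $(-)^{\natural}$ to a minimal $\add T$-coresolution of $A$, whose length equals $\pd(T)=d$ by standard tilting theory, produces a minimal $\add T$-resolution $0\to T^d\to T^{d-1}\to\cdots\to T^0\to DA\to 0$ of $DA$ of length~$d$. The self-duality $T^{\natural}\cong T$ ensures that the projective and injective dimensions of the indecomposable summands of $T$ agree, so some $T^i$ in this resolution contains an indecomposable summand of projective dimension~$d$. Choosing a simple $S$ witnessing $\Ext^d(T^i,S)\ne 0$ for that summand and shifting $d$ steps along the long exact sequences associated with the $\add T$-resolution of $DA$, while using the vanishings $\Ext^{>d}(T^j,-)=0$ (from $\idim(T^j)\le d$), one obtains $\Ext^{2d}(DA,S)\ne 0$ and hence $g=\pd(DA)\ge 2d$. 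The delicate point is to arrange this shifting so that the classes are not absorbed by image contributions from neighbouring terms; this is where the self-duality $T^{\natural}\cong T$ and the minimality of the resolution become essential.
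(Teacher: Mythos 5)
First, note that the paper does not prove this statement at all: it is imported verbatim from Mazorchuk--Ovsienko and the ``proof'' is the citation \cite{MazOv}. So your attempt is not competing with an argument in the paper but with a genuinely nontrivial external theorem, and it should be judged on whether it is complete on its own. The easy half is fine: Theorem \ref{mainresult} (whose proof does not use \ref{MazOvresult}, so there is no circularity), \ref{finitisticbound} and \ref{auslander-reitenfindimresult} give $\findim(A)=\idim(A)=:g$, and your dimension-shifting along an $\add(T)$-coresolution of $A$ of length at most $d=\pd(T)$ correctly yields $g\le 2d$. (Two smaller points you should still tighten: the fact that the minimal $\add(T)$-coresolution of $A$ has length exactly $\pd(T)$ is true but not free -- it comes from Miyashita-type tilting theory, via $\operatorname{coresdim}_{\add T}(A)=\pd({}_{\End_A(T)}T)=\pd(T_A)$; and $T^\natural\cong T$ as a module only permutes the indecomposable summands, so the summand-wise statement $T(i)^\natural\cong T(i)$, hence $\pd T(i)=\idim T(i)$, needs the argument via the sequences of \cite[Lemma 2.5]{ADLU}, exactly as in the proof of Theorem \ref{fankoetheorem}.)

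The genuine gap is the lower bound $g\ge 2d$, which is precisely the hard content of Mazorchuk--Ovsienko's theorem, and your sketch does not establish it. Carry out your own dimension shift explicitly: writing $0\to \tilde T^d\to\cdots\to\tilde T^0\to DA\to 0$ with syzygies $K_j$ ($K_0=DA$, $K_d=\tilde T^d$), the vanishing $\Ext^{>d}_A(\tilde T^j,-)=0$ gives
\begin{equation*}
\Ext^{2d}_A(DA,S)\;\cong\;\operatorname{coker}\bigl(\Ext^d_A(\tilde T^{d-1},S)\longrightarrow \Ext^d_A(\tilde T^{d},S)\bigr),
\end{equation*}
and everything hinges on this cokernel being nonzero for some simple $S$. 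Knowing that $\tilde T^d$ contains a summand with $\Ext^d(\tilde T^d,S)\neq 0$ is not enough: the map induced by $\tilde T^d\hookrightarrow \tilde T^{d-1}$ could perfectly well be surjective, and neither minimality of the approximations used to build the coresolution nor $T^\natural\cong T$ obviously rules this out -- minimality controls $\Hom$-level splittings, not surjectivity of induced maps on top-degree $\Ext$ groups. You acknowledge this (``the delicate point is to arrange this shifting so that the classes are not absorbed'') but supply no argument, and this is exactly where the real work lies; Mazorchuk and Ovsienko do not obtain the lower bound by such a direct Ext-shift but by a substantially longer structural/inductive analysis using the duality. As it stands the proposal proves only $\findim(A)\le 2\pd(T)$, not the stated equality.
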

\begin{proof}
	See \cite{MazOv}.
\end{proof}

\section{A characterisation of Gorenstein properly stratified algebras}
We will need the following lemma on local algebras in this section:
\begin{lemma} \label{localalgebralemma}
Let $A$ be a local algebra.
\begin{enumerate}
\item $A$ has finitistic dimension equal to zero.
\item $A$ is Gorenstein if and only if $A$ is selfinjective.

\end{enumerate}

\end{lemma}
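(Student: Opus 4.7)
The plan is to prove (1) by a direct structural argument on a minimal projective resolution, and then deduce (2) as an immediate corollary of (1) combined with Proposition \ref{auslander-reitenfindimresult}.

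For (1), the key fact is that since $A$ is local, the regular module $A$ is the only indecomposable projective up to isomorphism, so every projective $A$-module is free. Suppose for contradiction that some $M$ has $\pd M = n \geq 1$, and fix a minimal projective resolution
\begin{equation*}
0 \to A^m \to A^{k_{n-1}} \to \cdots \to A^{k_0} \to M \to 0
\end{equation*}
with $m \geq 1$. Minimality forces the image of the leftmost map to lie inside $J A^{k_{n-1}}$, where $J$ denotes the Jacobson radical of $A$. Since $A$ is finite-dimensional, $J$ is nilpotent, so we may fix the minimal $t \geq 1$ with $J^t = 0$. Restricting the embedding to one free summand of $A^m$ yields an injection $A \hookrightarrow J A^{k_{n-1}}$; multiplying the image of $1 \in A$ by any element of $J^{t-1}$ lands inside $J^t A^{k_{n-1}} = 0$, but the same element does not kill $1 \in A$ since $J^{t-1} \neq 0$ and the annihilator of $1$ is zero. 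This contradicts injectivity, so no such $M$ exists and $\findim(A) = 0$.

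For (2), one direction is immediate, as every selfinjective algebra is Gorenstein of dimension zero. For the converse I would first note that $A^{op}$ is again local, because $A/J$ being a division ring is equivalent to $A^{op}/J^{op}$ being one. Hence by part (1), both $A$ and $A^{op}$ have finitistic dimension zero, and Proposition \ref{auslander-reitenfindimresult} then identifies the Gorenstein dimension of $A$ with this common finitistic dimension. So $A$ has Gorenstein dimension zero, i.e.\ $A$ is selfinjective.

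The only point that requires any care is the use of minimality to guarantee that the image of the last differential lands inside the radical of the preceding projective; once that is in place, nilpotence of $J$ closes part (1), and the transfer to $A^{op}$ in part (2) is a one-line symmetry observation that lets Proposition \ref{auslander-reitenfindimresult} apply.
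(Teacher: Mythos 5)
Your proof is correct; part (2) coincides with the paper's argument (one direction is trivial, and the converse follows because Proposition \ref{auslander-reitenfindimresult} identifies the Gorenstein dimension with the finitistic dimension, which vanishes by part (1)) -- indeed your remark that $A^{op}$ is again local, so the finiteness hypotheses of that proposition hold on both sides, is slightly more careful than the paper's wording. For part (1) you use a different finishing device. The paper notes that all projectives over a local algebra have the same Loewy length, so the first syzygy of a non-projective module, lying in the radical of its projective cover, has strictly smaller Loewy length and hence is again non-projective; iterating shows every non-projective module has infinite projective dimension. You instead truncate a hypothetical finite minimal resolution and observe that its last term, a nonzero free module, would embed in the radical of the preceding projective, which is impossible since multiplication by a nonzero element of $J^{t-1}$ kills the radical (all entries lie in $J$, so products lie in $J^{t}=0$) but does not kill $1$. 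Both arguments rest on the same two facts -- over a local algebra every projective is free, and minimality places syzygies inside radicals -- and differ only in how one sees that a nonzero free module cannot sit inside $\rad(P)$: Loewy length comparison versus nilpotency of $J$; neither buys more generality, and both are equally elementary. One cosmetic point: the paper works with right modules, so the radical of $A^{k_{n-1}}$ is $A^{k_{n-1}}J$ and one multiplies the image of $1$ on the right; your notation $JA^{k_{n-1}}$ reads as the left-module convention, but the argument is side-symmetric and this does not affect correctness.
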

\begin{proof}
Note that in a local algebra there is a unique indecomposable projective module and thus all projective modules have the same Loewy length.
Let $M$ be a non-projective module and let $f_0$ be the projective cover $f_0: P \rightarrow M$. Then $\Omega^1(M)=\ker(f_0)$ has Loewy length strictly smaller than the Loewy length of $P$ and thus $\Omega^1(M)$ cannot be projective. Therefore all $\Omega^i(M)$ are non-projective and thus $M$ has infinite projective dimension and the finitistic dimension is zero. This shows (1).
Clearly any selfinjective algebra is Gorenstein.
Now assume that $A$ is a local Gorenstein algebra. Since $A$ is Gorenstein, the finitistic dimension of $A$ coincides with the Gorenstein dimension of $A$ by \ref{auslander-reitenfindimresult}. Thus the Gorenstein dimension is zero and $A$ is selfinjective, which shows (2).
\end{proof}

\begin{theorem} \label{mainresult}
Let $(A, \textbf{e})$ be a properly stratified algebra with characteristic tilting module $T$ and characteristic cotilting module $C$. Then the following are equivalent:
\begin{enumerate}
\item $A$ is Gorenstein.
\item $T=C$.
\item $\widehat{\add(T)} = \F (\nabla)$.
\end{enumerate}

\end{theorem}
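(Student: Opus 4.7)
The plan is to establish the cyclic chain $(2)\Rightarrow (1)\Rightarrow (3)\Rightarrow (2)$. The implications $(2)\Rightarrow (1)$ and $(3)\Rightarrow (2)$ are straightforward: for $(2)\Rightarrow (1)$, if $T=C$ then $T$ is simultaneously tilting and cotilting, so $T\in P^{<\infty}\cap I^{<\infty}$; the tilting coresolution $0\to A\to T_0\to\cdots\to T_r\to 0$ with $T_i\in\add T\subseteq I^{<\infty}$ yields $A\in I^{<\infty}$ by backward induction on $r$, while the cotilting resolution of $DA$ combined with $T\in P^{<\infty}$ gives $DA\in P^{<\infty}$, so ${}_AA$ also has finite left injective dimension; hence $A$ is Gorenstein. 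For $(3)\Rightarrow (2)$, the assumption $\widehat{\add T}=\F(\nabla)$ gives $T\in\add T\subseteq\widehat{\add T}=\F(\nabla)$, and combined with $T\in\F(\St)\subseteq\F(\pSt)$ from Lemma~\ref{filtrations of standard in proper standard}, we get $T\in\F(\pSt)\cap\F(\nabla)=\add C$ by Theorem~\ref{propstratresultsh}. Since $T$ and $C$ are basic modules with the same number $n$ of indecomposable summands (equal to the number of simple $A$-modules), $\add T\subseteq \add C$ forces $T\cong C$.

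For the main implication $(1)\Rightarrow (3)$, assume $A$ is Gorenstein. By Theorem~\ref{gorenstein characterisation} the tilting module $T$ is also cotilting, so by Theorem~\ref{cotiltcorres} it determines a complete cotorsion pair $({}^{\perp}T,\widehat{\add T})$; similarly Theorem~\ref{propstratresultsg} combined with Theorem~\ref{cotiltcorres} shows that $C$ determines the cotorsion pair $(\F(\pSt),\widehat{\add C})$. We identify $\widehat{\add C}=\F(\nabla)$: since $C\cong DT^{op}$ where $T^{op}$ is the characteristic tilting of the properly stratified algebra $A^{op}$, applying the duality $D$ to the equality $\widecheck{\add T^{op}}=\F(\St_{op})$ in $\mod A^{op}$ yields $\widehat{\add C}=\F(\nabla)$. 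So $C$'s cotorsion pair is $(\F(\pSt),\F(\nabla))$. By the bijection in Theorem~\ref{cotiltcorres} and the same basicity argument as in $(3)\Rightarrow (2)$, proving $(3)$ reduces to showing $T\in\add C=\F(\pSt)\cap\F(\nabla)$. The containment $T\in\F(\pSt)$ is Lemma~\ref{filtrations of standard in proper standard} again.

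The decisive point, and the main obstacle, is therefore to show $T\in\F(\nabla)$ under the Gorenstein hypothesis. The plan is to establish the auxiliary claim that for a Gorenstein properly stratified algebra one has $\F(\pCs)\cap I^{<\infty}\subseteq\F(\nabla)$: since $T\in\F(\pCs)$ as a characteristic tilting module and $T\in I^{<\infty}$ (cotilting under the Gorenstein assumption), this claim places $T$ in $\F(\nabla)$. To prove the claim, take $M\in\F(\pCs)\cap I^{<\infty}$ and invoke the complete cotorsion pair $(\F(\pSt),\F(\nabla))$ to produce an approximation sequence $0\to Y\to X\to M\to 0$ with $X\in\F(\pSt)$ and $Y\in\F(\nabla)$; since $\F(\pCs)$ is coresolving (Theorem~\ref{propstratresults}), $X$ inherits membership in $\F(\pCs)$ from $M$ and $Y\in\F(\nabla)\subseteq\F(\pCs)$, and $X$ inherits finite injective dimension from $Y,M$. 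Combining the Gorenstein identity $P^{<\infty}=I^{<\infty}$ with the cotorsion pair machinery and an induction on injective dimension should then force $X$ into $\add T\subseteq \F(\nabla)$, and consequently $M\in\F(\nabla)$ by coresolving closure of $\F(\nabla)$ under cokernels of monomorphisms. The principal technical challenge is navigating the subtle interplay between the four filtration classes $\F(\St),\F(\pSt),\F(\nabla),\F(\pCs)$ under the Gorenstein hypothesis, where none of the naively desired inclusions hold simultaneously.
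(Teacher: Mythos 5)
Your reductions are sound up to the decisive step: the arguments you give for (2)$\Rightarrow$(1) and (3)$\Rightarrow$(2) are correct (and in places more direct than the paper, which routes (3)$\Rightarrow$(1) through the finitistic-dimension results \ref{finitisticbound} and \ref{auslander-reitenfindimresult}), and you correctly isolate the crux of (1)$\Rightarrow$(3): since $T\in\F(\St)\subseteq\F(\pSt)$ by Lemma \ref{filtrations of standard in proper standard} and $\add C=\F(\pSt)\cap\F(\Cs)$, everything hinges on proving $T\in\F(\Cs)$ under the Gorenstein hypothesis, after which $T=C$ follows by counting indecomposable summands.

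However, your treatment of that crux has a genuine gap: as sketched it is circular. You reduce $T\in\F(\Cs)$ to the auxiliary inclusion $\F(\pCs)\cap I^{<\infty}\subseteq\F(\Cs)$. The inclusion is true (it is part (4) of Proposition \ref{projectiveisproperlystratified}, whose proof in the paper does not rely on Theorem \ref{mainresult}), but you must prove it, and your sketch ends by ``forcing $X$ into $\add T\subseteq\F(\Cs)$'' --- the containment $\add T\subseteq\F(\Cs)$ is exactly the statement $T\in\F(\Cs)$ that the auxiliary claim was introduced to deliver, so the argument presupposes its own conclusion. Even apart from this, the step ``force $X\in\add T$'' is unsubstantiated: the special precover only yields $X\in\F(\pSt)\cap\F(\pCs)\cap I^{<\infty}$, and passing from there to $\add T=\F(\St)\cap\F(\pCs)$ would require $\F(\pSt)\cap P^{<\infty}\subseteq\F(\St)$, i.e.\ the dual auxiliary claim, which is equally unproven (and whose dual proof would circle back to the original claim); the phrase ``induction on injective dimension \dots should then force'' is precisely where the argument is missing. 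The paper closes this gap by a shorter, non-circular device: by Theorem 1.6 of \cite{ADLU} applied to $A^{op}$, it suffices to show $\Ext_A^1(\pSt(i),T)=0$ for all $i$; since $T$ has finite injective dimension (Gorenstein), one considers the top degree $s_i$ with $\Ext_A^{s_i}(\pSt(i),T)\neq 0$, and if $s_i>0$ the filtration of $\St(i)$ by copies of $\pSt(i)$ from Lemma \ref{filtrations of standard in proper standard} propagates the non-vanishing to $\Ext_A^{s_i}(\St(i),T)\neq 0$, contradicting $T\in\F(\pCs)$ (Theorem \ref{propstratresults2}). Alternatively, your auxiliary inclusion can be proved non-circularly via (possibly infinite) $\add C$-resolutions of modules in $\F(\pCs)$ together with $\widehat{\add C}=\F(\Cs)$, as in the proof of Proposition \ref{projectiveisproperlystratified}; but that is a different argument from the one you propose, so as it stands the key implication is not established.
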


\begin{proof}
We first show that (2) is equivalent to (3) and that (3) implies (1).
Assume (2). Then $T=C$ and thus $\widehat{\add(T)}=\widehat{\add(C)}=\F( \nabla)$.
Now assume (3), so that we have $\widehat{\add(T)} = \F (\nabla)$.
Since $A$ is properly stratified we have $D(A) \in \F (\nabla)=\widehat{\add(T)}.$
Thus there is by definition an exact sequence 
$$0 \rightarrow T_r \rightarrow \cdots \rightarrow T_0 \rightarrow D(A) \rightarrow 0$$
with $T_i \in \add(T)$. 
Now since $T$ is a tilting module, it has finite projective dimension.
Let $K_{i+1}$ be defined as the kernel of the maps $T_i \rightarrow T_{i-1}$ for $i \geq 0$ with $T_{-1}:=D(A)$.
Then the exact sequence $0 \rightarrow T_r \rightarrow T_{r-1} \rightarrow K_{r-1} \rightarrow 0$ shows that $K_{r-1}$ has finite projective dimension since $T_r, T_{r-1} \in \add(T)$ have finite projective dimension. 
Inductively we see that also each $K_{i}$ has finite projective dimension because of the short exact sequences $0 \rightarrow K_{i+1} \rightarrow T_i \rightarrow K_i \rightarrow 0$ for each $i \geq 1$.
Thus also $D(A)$ has finite projective dimension because of the short exact sequence $0 \rightarrow K_1 \rightarrow T_0 \rightarrow D(A) \rightarrow 0$.
Since the right module $D(A)$ has finite projective dimension, by duality the left module $A$ has finite injective dimension. Since $A$ is assumed to be properly stratified, $A$ and $A^{op}$ have finite finitistic dimension by \ref{finitisticbound} and thus by \ref{auslander-reitenfindimresult} $A$ is Gorenstein.
Thus we see already that (3) implies (1) and we go further that (3) implies also (2).
To see this, note that since $A$ is Gorenstein we know that the tilting module $T$ is also a cotilting module by \ref{gorenstein characterisation}.
By the cotilting correspondence \ref{cotiltcorres}, $T$ as a cotilting module corresponds to the cotorsion pair $( {}^{\perp}T, \widehat{\add(T)})$. But the characteristic cotilting module $C$ also has $\widehat{\add(T)}=\F(\nabla)$ as the right side in the cotorsion pair (and the left and right sides in a cotorsion pair determine each other) and this implies $T=C$.

Assume now that (1) holds. The characteristic cotilting module is the unique module (up to multiplicities)  satisfying $\add C=\F(\pSt)\cap \F(\Cs)$. Thus, it is enough to prove that $\add T=\F(\pSt)\cap \F(\Cs)$. The category $\F(\pSt)\cap \F(\Cs)$ only contains $n$ non-isomorphic indecomposables modules, so we just need to show that $T\in \F(\pSt)\cap \F(\Cs)$. Since $T$ is the characteristic tilting module $\add T=\F(\St)\cap \F(\pCs)$. By Lemma \ref{filtrations of standard in proper standard}, $T\in \F(\St)\subset \F(\pSt)$. Due to $T\in \F(\pCs)$ and Lemma 1.2 (vi) of \cite{ADLU}, $\Ext^{j>0}_A(\St(i), T)=0$, for all $i$. In view of Theorem 1.6 of \cite{ADLU} and $A^{op}$ being standardly stratified, we want
\begin{align*}
	0=\Ext_{A^{op}}^1(DT, \pCs_{op}(i))\cong\Ext_A^1(D\pCs_{op}(i), T)\cong \Ext_A^1(\pSt(i), T), 
\end{align*}for all $i$.
 By Theorem \ref{gorenstein characterisation}, $T$ has finite injective dimension. Let $s_i$ be the smallest non-negative integer such that $\Ext_A^{s_i}(\pSt(i), T)\neq 0$ and $\Ext_A^{s_i+1}(\pSt(i), T)=0$, for each $i$.  Let $i=\{1, \ldots, n\}$ be arbitrary and assume that $s_i>0$.
 Since $A$ is properly stratified then $\St(i)$ admits a filtration $0=X_0\subset X_1\subset \cdots \subset X_t=\St(i)$ with factors of the form $\pSt(i)$ by Lemma \ref{filtrations of standard in proper standard}.  Applying $\Hom_A(-, T)$ to this filtration yields the exact sequence
 \begin{align}
 	\Ext_A^{s_i}(X_r, T)\rightarrow \Ext_A^{s_i}(X_{r-1}, T)\rightarrow \Ext_A^{s_i+1}(\pSt(i), T)=0, \ r=1, \ldots, t. \label{eq1}
 \end{align}
Since $X_1=\pSt(i)$ it follows by (\ref{eq1}) that $\Ext_A^{s_i}(X_r, T)\neq 0$, for all $r\geq 1$. In particular, $0\neq \Ext_A^{s_i}(X_t, T)=\Ext_A^{s_i}(\St(i), T)$. This contradicts $T$ belonging to $\F(\pCs)$. Thus, $s_i=0$. As $i$ was arbitrary, this shows that $T\in \F(\Cs)$.
\end{proof}

\begin{remark}
	Given a characteristic tilting module $T$ of a properly stratified algebra $A$, to check condition (2) of Theorem \ref{mainresult} it is enough to observe whether $T\in \mathcal{F}(\Cs)$. For a characteristic cotilting module $C$ of a properly stratified algebra $A$, it is enough to observe that $C\in \mathcal{F}(\St)$.
\end{remark}

\subsection{Properties of Gorenstein properly stratified algebras}

We record some further properties of Gorenstein properly stratified algebras. Recall that a module $M$ over a Gorenstein algebra $A$ is called \emph{Gorenstein projective} (or also maximal Cohen-Macaulay in the literature) if $\Ext_A^i(M,A)=0$ for all $i>0$. Dually $N$ is called \emph{Gorenstein injective} if $\Ext_A^i(D(A),N)=0$ for all $i >0$.
\begin{proposition}\label{projectiveisproperlystratified}
Let $(A, \textbf{e})$ be a properly stratified Gorenstein algebra.
\begin{enumerate}
\item Every Gorenstein injective module is in $\F(\overline{\nabla})$.
\item Every Gorenstein projective module is in $\F(\overline{\Delta})$.
\item $P^{< \infty} \cap \F(\overline{\Delta})=\F(\Delta)$.
\item $I^{< \infty } \cap \F(\overline{\nabla})=\F(\nabla)$.

\end{enumerate}

\end{proposition}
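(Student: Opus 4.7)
The key reduction is Theorem~\ref{mainresult}, which gives $T=C$; thus $T$ has both finite projective and finite injective dimension, and $\F(\Cs)=\widehat{\add T}$.

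For parts (1) and (2), I would invoke the standard Gorenstein fact that if $M$ is Gorenstein injective then $\Ext_A^i(L,M)=0$ for every $L\in I^{<\infty}$ and $i>0$ (by dimension shift along a finite injective resolution of $L$, starting from $\Ext_A^i(D(A),M)=0$), and dually for Gorenstein projective $M$. Taking $L=T$ together with Theorem~\ref{propstratresultse} (resp.\ \ref{propstratresultsg}) yields $M\in T^{\perp}=\F(\pCs)$ in (1) and $M\in{}^{\perp}T=\F(\pSt)$ in (2).

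For part (4), the forward inclusion $\F(\Cs)\subseteq I^{<\infty}\cap\F(\pCs)$ is immediate from Theorem~\ref{propstratresults1} and the costandard analogue of Lemma~\ref{filtrations of standard in proper standard}. For the reverse inclusion I would first establish an Auslander--Buchweitz-type closure: $\widehat{\add T}$ is closed under kernels of epimorphisms. This follows once one notes that $\Ext_A^i(L,T)=0$ for every $L\in\widehat{\add T}$ and $i>0$ (dimension shift along a finite $\add T$-resolution), so $\add T$ is Ext-injective in $\widehat{\add T}$; a mapping-cone construction on $\add T$-resolutions, using this Ext-injectivity to lift a surjection between $\widehat{\add T}$-objects to a compatible chain map, produces the desired finite $\add T$-resolution of the kernel. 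Granting this closure, I induct on $\id Y$ for $Y\in I^{<\infty}\cap\F(\pCs)$. The base case $\id Y=0$ reduces to $D(A)\in\F(\Cs)$, which holds because $A\in\F(\St_{op})$ in $\mod A^{op}$. For the inductive step, embed $Y$ in its injective envelope to get $0\to Y\to I^0\to Y'\to 0$; since $\F(\pCs)$ is coresolving (Theorem~\ref{propstratresults}) and $I^0\in\F(\Cs)\subseteq\F(\pCs)$, we have $Y'\in\F(\pCs)$ with $\id Y'<\id Y$, hence $Y'\in\F(\Cs)$ by induction, and the closure lemma yields $Y\in\F(\Cs)$.

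Part (3) then follows by applying part (4) to $A^{op}$, which is again a properly stratified Gorenstein algebra: the duality $D$ exchanges $P^{<\infty}(A)$ with $I^{<\infty}(A^{op})$, proper standards of $A$ with proper costandards of $A^{op}$, and standards of $A$ with costandards of $A^{op}$, together with the corresponding filtration categories. The main obstacle is verifying the Auslander--Buchweitz closure lemma for $\widehat{\add T}$: while the mapping-cone construction is conceptually standard, the crucial step is using Ext-injectivity of $\add T$ at each stage to lift the given surjection to a compatible chain map between $\add T$-resolutions.
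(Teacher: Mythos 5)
Parts (1) and (2) of your proposal are correct and essentially the paper's own argument: the paper also dimension-shifts along a finite injective coresolution of $T$ (finite because $A$ is Gorenstein and $T$ has finite projective dimension) to get $D(A)^{\perp}\subseteq T^{\perp}=\F(\pCs)$, and dually for Gorenstein projectives.

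For (3)/(4), however, your central lemma is false, so the proposed route breaks down. You claim that $\Ext_A^i(L,T)=0$ for every $L\in\widehat{\add T}$ and $i>0$ by ``dimension shift along a finite $\add T$-resolution of $L$''. That shift only works in degrees exceeding the length of the resolution: from $0\to K_1\to T_0\to L\to 0$ one gets $\Ext^{i+1}(L,T)\cong\Ext^i(K_1,T)$ for $i\geq 1$, but in low degrees one is left with $\Ext^1(K_{i-1},T)$, a quotient of $\Hom(K_i,T)$, with no reason to vanish. Indeed, since $\widehat{\add T}=\F(\Cs)$ and ${}^{\perp}T={}^{\perp}C=\F(\pSt)$ (Theorem \ref{propstratresultsg} together with $T=C$), your claim is equivalent to $\F(\Cs)\subseteq\F(\pSt)$, which in particular would force $D(A)\in\F(\pSt)\cap\F(\Cs)=\add C$, i.e.\ every injective would be a summand of the characteristic cotilting module. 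A concrete counterexample is the Auslander algebra of $K[x]/(x^2)$, a quasi-hereditary (hence properly stratified, with $\pSt=\St$, $\pCs=\Cs$) algebra of global dimension $2$, hence Gorenstein: here $T=S(1)\oplus P(1)$ with $P(1)$ projective-injective, and $0\to S(1)\to P(1)\to I(2)\to 0$ is an $\add T$-resolution of $\Cs(2)=I(2)$, yet applying $\Hom_A(-,S(1))$ to this sequence gives $\Ext_A^1(I(2),S(1))\cong\Hom_A(S(1),S(1))\neq 0$ with $S(1)\in\add T$. The same example kills the ``Auslander--Buchweitz closure'': the epimorphism $I(2)\twoheadrightarrow S(1)$ (quotient by the socle) has kernel $S(2)$, which does not even admit a surjection from a module in $\add T$, so $\widehat{\add T}$ is not closed under kernels of epimorphisms, and your inductive step (applying the closure to $0\to Y\to I^0\to Y'\to 0$) is exactly the unjustified point. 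Note the extra hypotheses $Y\in\F(\pCs)$, $Y\in I^{<\infty}$ cannot be fed into the closure lemma as you stated it, so the gap is genuine.

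The paper avoids this by a different mechanism: by (the dual of) Lemma 2.5(iii) of \cite{ADLU}, every module in $\F(\pSt)$ admits a possibly infinite $\add T$-coresolution (dually, modules in $\F(\pCs)$ admit $\add T$-resolutions, built from minimal $\F(\St)$-approximations and Wakamatsu's lemma); for $X\in P^{<\infty}\cap\F(\pSt)$ the Gorenstein hypothesis gives $X\in I^{<\infty}$, which forces the coresolution to terminate, whence $X\in\widecheck{\add T}=\F(\St)$, and (4) is dual. So the missing ingredient in your argument is precisely this approximation/coresolution input replacing the (false) Ext-injectivity of $\add T$ in $\widehat{\add T}$. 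Your forward inclusions, the base case $D(A)\in\F(\Cs)$, and the reduction of (3) to (4) via the opposite algebra are all fine.
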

\begin{proof}
We prove (1), the proof of (2) is dual.
Let $T$ be a tilting module. $T$ has by definition finite projective dimension and since $A$ is Gorenstein, $T$ also has finite injective dimension. 
Let 
$$0 \rightarrow T \rightarrow I_0 \rightarrow \cdots \rightarrow I_{r+1} \rightarrow 0$$
be a minimal injective coresolution of $T$.
Then we have the short exact sequence $0 \rightarrow \Omega^{-r}(T) \rightarrow I_r \rightarrow I_{r+1} \rightarrow 0.$
Now let $X$ be an $A$-module with $X \in D(A)^{\perp}$, then we apply the functor $\Hom_A(-,X)$ to the above short exact sequence and obtain the following exact sequence as a part of the resulting long exact Ext-sequence for every $t \geq 1$:
$$\cdots \rightarrow \Ext_A^t(I_{r+1},X) \rightarrow \Ext_A^t(I_r,X) \rightarrow \Ext_A^t(\Omega^{-r}(T),X) \rightarrow \Ext_A^{t+1}(I_{r+1},X) \rightarrow \cdots .$$
Since we assume $X \in D(A)^{\perp}$, we obtain $\Ext_A^t(\Omega^{-r}(T),X)=0$ for all $t \geq 1$.
Now we apply the functor $\Hom_A(-,X)$ to the short exact sequence $0 \rightarrow \Omega^{-(r-1)}(T) \rightarrow I_{r-1} \rightarrow \Omega^{-r}(T) \rightarrow 0$ conclude that also $\Ext_A^t(\Omega^{-(r-1)}(T),X)=0$ for all $t \geq 1$ and by using induction we obtain in the same way $\Ext_A^t(T,X)=0$ for all $t \geq 1$.
Thus we have $D(A)^{\perp} \subseteq T^{\perp}$ for every tilting module $T$ assuming that $A$ is Gorenstein. Choosing for $T$ the characteristic tilting module, we obtain $D(A)^{\perp} \subseteq F(\overline{\nabla})$. \newline
We prove now (3), the proof of (4) is dual.
$\F(\Delta) \subseteq P^{< \infty} \cap \F(\overline{\Delta})$ is clear by \ref{propstratresults1}.
Now assume $X \in P^{< \infty} \cap \F(\overline{\Delta})$.
By the dual of Lemma 2.5. (iii) of \cite{ADLU}, every module $M \in \F(\overline{\Delta})$ has a (possibly infinite) $\add(T)$-coresolution. 
Thus $X$ has such a coresolution as follows:
$$0 \rightarrow X \rightarrow T_0 \rightarrow T_1 \rightarrow \cdots \rightarrow T_i \rightarrow \cdots ,$$
with $T_i \in \add(T)$.
Now since $A$ is Gorenstein, $X \in P^{< \infty}$ has finite injective dimension.
If the $add(T)$-coresolution would be infinite, we would have $\Ext_A^i(K_i,X) \neq 0$ for all $i >0$ when $K_i$ is the kernel of $T_i \rightarrow T_{i+1}$ and $X$ would have infinite injective dimension. Thus the coresolution is finite and $X \in \widecheck{\add(T)}=\F(\Delta).$
\end{proof}

\begin{theorem}\label{endofstandardisfrobenius}
Let $(A, \textbf{e})$ be a properly stratified Gorenstein algebra.
Then the following hold for any $i$:
\begin{enumerate}
\item $\End_A(\Delta(i))$ is a Frobenius algebra.
\item $\End_A(\nabla(i))$ is a Frobenius algebra.
\item The quotient $(A/A(e_{i+1}+\cdots+ e_n)A, \textbf{e}^i)$ is a properly stratified Gorenstein algebra, with  $\textbf{e}^i=\{e_1^i, \ldots, e_i^i \}$ where $e_j^i$ denotes the image of $e_j$ in $A/A(e_{i+1}+\cdots+ e_n)A$.
\end{enumerate}
\end{theorem}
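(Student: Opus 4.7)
The plan is to prove (3) first, then reduce (1) to the case where $i=n$ is the top index using (3), and finally derive (2) from (1) by applying the duality between $A$ and $A^{op}$.

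For (3), I will iterate Lemma \ref{filtrations of standard in proper standard}(1) to show that $B := A/A(e_{i+1}+\cdots+e_n)A$ is standardly stratified with the induced order, and apply the same iteration to $A^{op}$ to obtain that $B^{op}$ is standardly stratified; thus $B$ is properly stratified. The same iteration also shows that the ideal $I := A(e_{i+1}+\cdots+e_n)A$ is projective as a right $A$-module, so from $0 \to I \to A \to B \to 0$ one gets $\pd_A B \leq 1$; Gorensteinness of $A$ (via $P^{<\infty}=I^{<\infty}$) then forces $B$ to have finite injective dimension as a right $A$-module, say bounded by $d$. Using Proposition \ref{extlemmaidemideal}, the equality $\Ext_B^j(S,B)=\Ext_A^j(S,B)$ for every simple $B$-module $S$ transfers this bound to $\idim_B B \leq d$, and together with Theorem \ref{finitisticbound} applied to $B$ and $B^{op}$, Proposition \ref{auslander-reitenfindimresult} yields that $B$ is Gorenstein.

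For (1), I first note that $\St^A(i)$ lies in $\mod B$ and equals $\St^B(i)$ there; since $i$ is the top index of $B$, one has $\St^B(i) = e_iB = P^B(i)$ and hence $\End_A(\St^A(i)) = e_iBe_i$. Using (3) I replace $(A,\textbf{e})$ by $(B,\textbf{e}^i)$ and assume from the outset that $i=n$ is the top index, so the task becomes to show that the local algebra $e_nAe_n$ is Frobenius; by Lemma \ref{localalgebralemma}(2) it suffices to show its regular module has finite injective dimension. Applying Lemma \ref{filtrations of standard in proper standard}(2) to both $A$ and $A^{op}$, both $Ae_n$ (as right $e_nAe_n$-module) and $e_nA$ (as left $e_nAe_n$-module) are progenerators over $e_nAe_n$, and since $e_nAe_n$ is local both must be free. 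Decomposing $e_nA = \bigoplus_k e_nAe_k$ as a left $e_nAe_n$-module exhibits each $e_nAe_k$ as a direct summand of a free module, hence free, so $D(e_nAe_k) \cong D(e_nAe_n)^{r_k}$ is injective as a right $e_nAe_n$-module. Then I apply the exact functor $\Hom_A(e_nA,-) = (-)e_n \colon \mod A \to \mod e_nAe_n$ to a finite injective coresolution $0 \to A \to I_0 \to \cdots \to I_d \to 0$ of $A_A$; the resulting exact sequence $0 \to Ae_n \to I_0 e_n \to \cdots \to I_d e_n \to 0$ is then a finite injective coresolution of $Ae_n$ in $\mod e_nAe_n$, and since $e_nAe_n$ is a direct summand of the free module $Ae_n$, it too has finite injective dimension.

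For (2), the observation that $A^{op}$ is again properly stratified and Gorenstein, combined with the identification $\End_A(\Cs^A(i)) \cong \End_{A^{op}}(\St^{A^{op}}(i))^{op}$ coming from $\Cs^A(i)=D\St^{A^{op}}(i)$, reduces (2) to (1) applied to $A^{op}$, using that the opposite of a Frobenius algebra is Frobenius. I expect the main obstacle to be the step in (1) establishing that each $D(e_nAe_k)$ is injective over $e_nAe_n$: this rests on a clean combination of the two-sided progenerator statements of Lemma \ref{filtrations of standard in proper standard}(2) with the observation that finitely generated projective modules over a local algebra are free.
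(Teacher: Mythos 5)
Your overall scheme (prove (3), use it to reduce (1) to the top index, deduce (2) by passing to $A^{op}$) parallels the paper's induction, but your argument for the top-index case of (1) is genuinely different from the paper's and is correct: instead of constructing finite $\add(T)$-resolutions of injectives via Wakamatsu's Lemma and the characteristic tilting module, you use Lemma \ref{filtrations of standard in proper standard}(2) on both sides to see that $Ae_n$ and $e_nA$ are progenerators, hence free, over the local algebra $e_nAe_n$, and then push a finite injective coresolution of $A_A$ through the exact functor $(-)e_n$; together with finitistic dimension zero for local algebras this gives selfinjectivity, hence the Frobenius property. (Two harmless imprecisions: Lemma \ref{localalgebralemma}(2) as stated needs two-sided finiteness, but your one-sided bound plus Lemma \ref{localalgebralemma}(1) and Proposition \ref{auslander-reitenfindimresult} gives the same conclusion; and you should add, as the paper does, that a local selfinjective algebra is automatically Frobenius.)

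There is, however, a genuine gap in your proof of (3): iterating Lemma \ref{filtrations of standard in proper standard}(1) does \emph{not} show that $I=A(e_{i+1}+\cdots+e_n)A$ is projective as a right $A$-module; it only shows that the successive subquotients of the chain of trace ideals are projective over the successive quotient algebras, which yields $\pd_A I<\infty$ (and hence $\pd_A B<\infty$), nothing more. Projectivity of the composite ideal genuinely fails inside the class of the theorem: for the representation-finite Schur algebra block $\mathcal{A}_3$ (quiver $1\rightleftarrows 2\rightleftarrows 3$ with the relations recalled in the last section), which with the order $1<2<3$ is quasi-hereditary of finite global dimension, hence properly stratified Gorenstein, the Peirce decomposition gives $A(e_2+e_3)A=\rad P(1)\oplus P(2)\oplus P(3)$ with $\rad P(1)\cong \Delta(2)$ two-dimensional and not projective. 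Consequently you cannot invoke Proposition \ref{extlemmaidemideal} with the composite idempotent $e_{i+1}+\cdots+e_n$, since its hypothesis is not satisfied. The repair is exactly the paper's one-index-at-a-time induction: pass to $A/Ae_nA$, where $Ae_nA$ \emph{is} projective, so the Ext-comparison of Proposition \ref{extlemmaidemideal} applies; since $A/Ae_nA$ has finite projective, hence finite injective, dimension over $A$, its self-injective dimension is finite, and Theorem \ref{finitisticbound} together with Proposition \ref{auslander-reitenfindimresult} shows $A/Ae_nA$ is again properly stratified Gorenstein; now iterate. With this modification your proofs of (1)--(3) go through.
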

\begin{proof}
We prove (1), the proof of (2) is dual.
Assume $A$ is properly stratified Gorenstein with linear order $1,...,n$ of the $n$ simple $A$-modules.
Let $I$ be an injective $A$-module. Since $\F(\Delta)$ is contravariantly finite, there exists a minimal right $\F(\Delta)$-approximation of $I$:
$$0 \rightarrow K_I \rightarrow F_0 \rightarrow I \rightarrow 0,$$ 
where $F_0 \in \F(\Delta)$.
Since $\F(\Delta)$ is extension-closed we obtain $\Ext_A^1(\F(\Delta),K_I)=0$ from Wakamatsu's Lemma.
Thus $K_I \in \F(\overline{\nabla})$.
Now we apply the functor $\Hom_A(X,-)$ for $X \in \F(\Delta)$ to the above short exact sequence to obtain from the long exact $\Ext$-sequence:
$$\cdots \rightarrow 0=\Ext_A^1(X,K_I) \rightarrow \Ext_A^1(X,F_0) \rightarrow \Ext_A^1(X,I)=0 \rightarrow \cdots .$$
Thus $\Ext_A^1(X,F_0)=0$ for all $X \in \F(\Delta)$ and thus $F_0 \in \F(\Delta) \cap \F( \overline{\nabla})=\add(T)$.
Since $K_I \in \F(\overline{\nabla})$ we can continue this process to obtain an $\add(T)$-resolution of $I$ as follows:
$$\cdots \rightarrow F_t \rightarrow F_{t-1} \rightarrow \cdots \rightarrow F_0 \rightarrow I \rightarrow 0.$$
Let $K_t$ denote the kernel of the map $F_t \rightarrow F_{t-1}$ then if this resolution would be infinite, we would have $\Ext^t(I,K_t) \neq 0$ for all $t \geq 1$ and thus $I$ would have infinite projective dimension, contradicting our assumption that $A$ is Gorenstein.
Thus there exists a finite $\add(T)$-resolution of $I$:
$$0 \rightarrow T_r \rightarrow T_{r-1} \rightarrow \cdots \rightarrow T_0 \rightarrow I \rightarrow 0,$$
with $T_i \in \add(T)$ for all injective $A$-modules $I$.
Since $\Delta(n)$ is projective, the module $I:=D\Hom_A(\Delta(n),A)$ is injective.
We apply the functor $\Hom_A(\Delta(n),-)$ to the above finite $\add(T)$-resolution of this $I$ to obtain the exact sequence:
\begin{equation}\label{exactsequ1}
0 \rightarrow \Hom_A(\Delta(n),T_r) \rightarrow \cdots \rightarrow \Hom_A(\Delta(n),T_0) \rightarrow \Hom_A(\Delta(n),I) \rightarrow 0.
\end{equation} 
By \cite[Lemma 2.5]{ADLU}, there is a left $\F(\overline{\nabla})$-approximation of $\Delta(i)$ as follows:
$$0 \rightarrow \Delta(i) \rightarrow T(i) \rightarrow Y_i \rightarrow 0,$$
where $Y_i \in \F(\Delta_{j<i})$ and $T(i)$ is the indecomposable $i$-th summand of the characteristic tilting module $T$.
We apply $\Hom_A(\Delta(n),-)$ to the left approximation as above for $i=n$ to obtain:
$$0 \rightarrow \Hom_A(\Delta(n),\Delta(n)) \rightarrow \Hom_A(\Delta(n),T(n)) \rightarrow \Hom_A(\Delta(n),Y_n)=0 \rightarrow 0.$$
Thus $\Hom_A(\Delta(n),T(n)) \cong \Hom_A(\Delta(n),\Delta(n)) $ is a projective $R$-module, where $R:=\End_A(\Delta(n))$.
Since $\Hom_A(\Delta(n),T(j))=0$ for $j < n$, we have that $\Hom_A(\Delta(n),T')$ is a projective $R$-module for any $T' \in \add(T)$.
Thus the exact sequence \ref{exactsequ1} gives us that the module $\Hom_A(\Delta(n),I)$ has finite projective dimension as an $R$-module. But since $R$ is local and therefore has finitistic dimension equal to zero by \ref{localalgebralemma} (1), $\Hom_A(\Delta(n),I)$ must be even projective.
Now, since $\St(n)$ is $A$-projective
$$\Hom_A(\Delta(n),I)=\Hom_A(\Delta(n),D\Hom_A(\Delta(n),A)) \cong D(\Hom_A(\St(n), A)\otimes_A \St(n))=DR$$
is injective as an $R$-module.
Thus $\Hom_A(\Delta(n),I)$ is a projective-injective non-zero module for the local algebra $R$ and thus $R$ is selfinjective and therefore also a Frobenius algebra (using that every selfinjective local algebra is automatically Frobenius).
We proved that $R=\End_A(\Delta(n))$ is Frobenius and now we can use $\Ext_{A/Ae_n A}^i(M,N) \cong \Ext_A^i(M,N)$ for every $A/Ae_n A$-modules $M$ and $N$, which is true by \ref{extlemmaidemideal}.
This gives us that $\Ext_{A/Ae_n A}^i(M,A/Ae_n A) \cong \Ext_A^i(M,A/Ae_n A)=0$ for all $i>p$ for some finite number $p$ since as an $A$-module $A/Ae_n A$ has finite projective dimension (since $Ae_nA$ has finite projective dimension) and therefore also finite injective dimension, using that $A$ is Gorenstein.
Thus with $A$, also the properly stratified algebra $A/Ae_n A$ is Gorenstein. Now we can use induction to conclude that $\End_A(\Delta(i))$ is Frobenius for all $i=1,...,n$ and the remaining cases of (3).
\end{proof}

The next example shows that there are non-Gorenstein properly stratified algebras such that $P^{< \infty} \cap \F(\overline{\Delta}) = \F( \Delta)$ and such that all endomorphism rings of the $\Delta(i)$ are Frobenius algebras. This shows that those properties can not be used to characterise the Gorenstein property for properly stratified algebras.

\begin{example}
Let $Q$ be the quiver
\begin{tikzcd}
	2 \arrow[r, "\beta"] & 1 \arrow[loop, distance=3em, out=35, in=-35, "\alpha"]
\end{tikzcd}
and $A=KQ/I$ with $I=<\alpha^2,\beta \alpha>$.
$A$ is a radical square zero algebra and it is easily checked that $A$ has exactly five indecomposable modules with $\add(P^{< \infty})=\add(e_1 A \oplus e_2 A)$.  Thus $A$ has finitistic dimension equal to zero. Since for Gorenstein algebras, the finitistic dimension coincides with the Gorenstein dimension, $A$ can only be Gorenstein when it is selfinjective. Since the indecomposable projective modules are not injective, $A$ is not selfinjective and therefore also not Gorenstein.
We have $\Delta(1)=e_1 A, \Delta(2)=e_2 A, \overline{\Delta(1)}=S_1$ and $\overline{\Delta(2)}=e_2 A$.
Thus $A$ is properly stratified and not Gorenstein.
We have $\End_A(\Delta(1))\cong K[x]/(x^2)$ and $\End_A(\Delta(2)) \cong K$, which are Frobenius algebras.
We have $P^{< \infty} \cap \F(\overline{\Delta}) = \F( \Delta)$ since $\add(P^{< \infty})=\add(e_1 A \oplus e_2 A)=\F(\Delta).$

\end{example} 

The following example shows that unlike quasi-hereditary algebras, in general, Gorenstein properly stratified algebras have no recollements of bounded derived categories for $A/AeA$, $A$ and $eAe$.

\begin{example}
Let $K$ be an algebraically closed field and let $A$ be the bound quiver $K$-algebra
$\begin{tikzcd}[every arrow/.append style={shift left=0.75ex}]
	1 \arrow[r, "\alpha_1"] & 2 \arrow[l, "\beta_1"] \arrow[r, "\alpha_2"] & 3 \arrow[l, "\beta_2"]
\end{tikzcd}$ with relations $0=\beta_1\alpha_1\alpha_2=\beta_2\beta_1\alpha_1=\beta_2\alpha_2$, $\alpha_2\beta_2=\beta_1\alpha_1\beta_1\alpha_1$.
\end{example}$A$ is a properly stratified algebra with $\St(3)=\pSt(3)=e_3A$, $\St(1)=\pSt(1)=S(1),$ $\St(2)=e_2A/\St(3)$, and $\pSt(2)=\begin{array}{c}
2\\1
\end{array}$. $A$ has injective dimension two and $A/Ae_3A$ is the bound quiver algebra
$\begin{tikzcd}[every arrow/.append style={shift left=0.75ex}]
	1 \arrow[r, "\alpha_1"] & 2 \arrow[l, "\beta_1"] 
\end{tikzcd}$ with relation $\beta_1\alpha_1\beta_1\alpha_1=0$. Note that $\Hom_A(A/Ae_3A, P(3))\cong  \pSt(2)$ as $A/Ae_3A$-modules. Since $\pSt(2)$ has infinite injective dimension it follows that the condition (1) of Proposition 3.4 \cite{CheKo} holds but condition (3) fails.
\section{GIGS algebras}
We first define a generalised version of the algebras studied by Fang and Koenig in \cite{FanKoe3}.
\begin{definition}
A \emph{GIGS} algebra is a gendo-symmetric properly stratified Gorenstein algebra $(A, \textbf{e})$ having a duality.
 
\end{definition}
In \cite{FanKoe3}, the class of gendo-symmetric and quasi-hereditary algebras $(A, \textbf{e})$ having a duality is called $\mathcal{A}$.
Note that a GIGS algebra has finite global dimension if and only if it is in the class $\mathcal{A}$ of \cite{FanKoe3}. By \ref{MazOvresult}, GIGS algebras have even Gorenstein dimension equal to two times the projective dimension of the characteristic tilting module since the Gorenstein dimension coincides with the finitistic dimension.
Our main result \ref{mainresult} says that being Gorenstein is equivalent to the characteristic tilting module coinciding with the characteristic cotilting module, which is the central property used by Fang and Koenig in \cite{FanKoe3} to prove their main results. For a properly stratified algebra $A$, we will denote by $R_A$ the Ringel dual of $A$. Note that since being Gorenstein is a derived invariant, $A$ is Gorenstein if and only if $R_A$ is Gorenstein.
We give an example of a GIGS algebra with infinite global dimension.
\begin{example}
Let $A=K[x,y]/(xy,x^2-y^3)$ and $M=A \oplus xA \oplus yA \oplus x^2 A$ and $B=\End_A(M)$.
Note that $A$ is a (commutative) symmetric Frobenius algebra and thus $B$ is gendo-symmetric. By results of Chen and Dlab in \cite{CheDl} $B$ is also properly stratified with a duality.
$B$ has Gorenstein dimension 4, dominant dimension 2 and infinite global dimension. 

\end{example}

The next result was proven in \cite[Theorem 4.3]{FanKoe3} for GIGS algebras with finite global dimension, but the arguments remain valid  for GIGS algebras once we establish that the Ringel dual is also gendo-symmetric.
\begin{theorem} \label{fankoetheorem}
Let $(A, \textbf{e})$ be a GIGS algebra with characteristic tilting module $T$.
\begin{enumerate}
\item The Ringel dual of $A$, $R_A$, is again a GIGS algebra.
\item $2 \domdim(T)=\domdim(A)$.
\end{enumerate}
\end{theorem}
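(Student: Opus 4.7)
The plan is to reduce to the proof of \cite[Theorem 4.3]{FanKoe3}, which was stated for the quasi-hereditary setting but whose essential input is the identity $T=C$ of the characteristic tilting and cotilting modules. By Theorem~\ref{mainresult} this identity holds for every GIGS algebra, so the substantive parts of the Fang--Koenig argument transfer once the defining properties of ``GIGS'' are checked for $R_A$. I therefore split the proof of (1) into verifying those four properties in turn, and then (2) follows from Fang--Koenig's computation.

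Three of the four properties transfer easily. First, $R_A$ is properly stratified by \cite[Theorem 3.7]{Rei}. Second, since $T$ is a tilting module it induces a triangle equivalence $\Db(\mod A)\simeq\Db(\mod R_A)$; Gorensteinness is preserved under derived equivalence, since by Theorem~\ref{gorenstein characterisation} it is equivalent to the equality $P^{<\infty}=I^{<\infty}$, and both subcategories are derived-invariant (being characterised by boundedness of projective, respectively injective, resolutions). Third, the duality $(-)^\natural$ of $A$ is a simple-preserving exact contravariant functor that exchanges tilting and cotilting modules; combined with $T\cong C$ this gives $T^\natural\cong T$, which transports to an involutive anti-automorphism of $R_A=\End_A(T)$ fixing a complete set of primitive orthogonal idempotents coming from a $(-)^\natural$-stable decomposition of $T$ into indecomposables.

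The main obstacle is to show $R_A$ is gendo-symmetric. Writing $A=\End_U(U\oplus M)$ for a symmetric algebra $U$, the minimal faithful projective-injective $A$-module has the form $fA$ with $fAf\cong U$. Using $T=C$, the characterisations in Theorem~\ref{propstratresultsf} and Theorem~\ref{propstratresultsh} give $\add T=\F(\St)\cap\F(\pCs)=\F(\pSt)\cap\F(\Cs)$; combined with the dual of Lemma~\ref{filtrations of standard in proper standard}, which yields $\F(\Cs)\subseteq\F(\pCs)$, every projective-injective $A$-module lies in $\F(\St)\cap\F(\Cs)$ and hence in $\add T$. Thus $fA\in\add T$, and the corresponding idempotent $e\in R_A=\End_A(T)$ satisfies $eR_Ae\cong\End_A(fA)\cong fAf\cong U$, which is symmetric. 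It then remains to verify that $R_A$ has dominant dimension at least two with $R_Ae$ the minimal faithful projective-injective $R_A$-module, giving $R_A\cong\End_U(R_Ae)$; this is a formal consequence of Ringel duality and the identification of the injective $R_A$-modules via $D\Hom_A(-,T)$ applied to $\add T$.

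For (2), once $R_A$ is known to be a GIGS algebra, the identity $2\domdim T=\domdim A$ follows as in \cite[Theorem 4.3]{FanKoe3}: using Morita--Tachikawa one expresses $\domdim A$ in terms of invariants of the gendo-symmetric presentation $A=\End_U(U\oplus M)$, the characteristic tilting module $T$ appears as a ``halfway'' object in the minimal injective coresolution of $A$, and the self-duality between $A$ and $R_A$ supplied by part (1) forces the factor of two. I expect the gendo-symmetric step in (1) to be the only genuinely new technical point; the rest is formal from Ringel duality, derived invariance of Gorensteinness, and the main theorem of this paper.
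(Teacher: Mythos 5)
Your overall strategy is the same as the paper's: use Theorem \ref{mainresult} to get $T=C$, deduce that $R_A$ is properly stratified and Gorenstein (derived invariance), transport the duality via $T^\natural\cong T$, and then reduce the rest to the arguments of \cite[Theorem 4.3]{FanKoe3}, the only genuinely new point being that $R_A$ is gendo-symmetric. Your observations that the projective-injective module $eA$ lies in $\add T$ and that the corresponding idempotent of $R_A$ has $\End_A(eA)\cong eAe$ symmetric are correct and are also implicit in the paper.

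However, at the decisive step you declare to be ``a formal consequence of Ringel duality'': it is not. To conclude gendo-symmetry of $R_A$ (whether via Morita--Tachikawa or via \cite[Theorem 3.2]{FanKoe2}) you must know (i) that $\Hom_A(T,eA)$ is not only projective but also \emph{injective}, faithful and minimal as a right $R_A$-module, and (ii) that $\domdim R_A\geq 2$. Neither follows formally from $eA\in\add T$ and $eR_Ae\cong eAe$. Injectivity of $\Hom_A(T,eA)$ over $R_A$ requires the gendo-symmetry of $A$ itself: one needs $D(eA)\cong Ae$ to produce the $(eAe,R_A)$-bimodule isomorphisms
\begin{align*}
\Hom_{R_A}(\Hom_A(T,eA),R_A)\cong \Hom_A(eA,T)\cong \Hom_A(DT,Ae),
\end{align*}
which identify the $R_A$-dual of the right projective-injective with the left one; this, together with the properly stratified analogue of \cite[Lemma 4.2]{FanKoe3} and part (2) of the proof of \cite[Theorem 4.3]{FanKoe3}, is exactly what yields $\domdim R_A\geq 2$ and hence gendo-symmetry. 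In your write-up this computation is absent, so the claim $R_A\cong\End_U(R_Ae)$ (note also the left/right slip: the relevant right module is $eR_A\cong\Hom_A(T,eA)$) is unjustified, and with it both (1) and the input needed for (2). A smaller point: for the duality of $R_A$ to fix a complete set of primitive idempotents you need $T(i)^\natural\cong T(i)$ for each $i$ (obtained in the paper from the filtration sequences of \cite[Lemma 2.5]{ADLU}), not merely $T^\natural\cong T$, which a priori only permutes the summands; your argument should be refined accordingly, e.g.\ by comparing the $\St$- and $\Cs$-filtrations of $T(i)^\natural$.
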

\begin{proof}
	By Theorem \ref{mainresult}, $T$ is also the characteristic cotilting module and therefore the Ringel dual of $A$, $R_A$, is a properly stratified algebra by Theorem 5 of \cite{FrMa}. Using the exact sequences for tilting modules established in \cite[Lemma 2.5]{ADLU} we can see that $T(i)^\natural \cong T(i)$, for all $i$. In particular, $T\cong T^\natural$. By Proposition 2.4 of \cite{FanKoe3}, $R_A$ has a duality $\tau$
	which fixes all the idempotents $T\twoheadrightarrow T(i)\hookrightarrow T$. For the statement (1), it remains to show that $R_A$ is gendo-symmetric. Let $eA$ be the minimal faithful projective-injective module of $A$. In particular $(eA)^\natural\cong eA$. Analogously to \cite[Lemma 4.2]{FanKoe3}, $\Hom_A(T, eA)$ is a right minimal faithful projective-injective module over $R_A$ and $\Hom_A(DT, Ae)$ is a left minimal faithful projective-injective module over $R_A$. Further, as $(eAe, R_A)$-bimodules
	\begin{align}
		\Hom_{R_A}(\Hom_A(T, eA), R_A)&\cong \Hom_{R_A}(\Hom_A(T, eA), \Hom_A(T, T))\\&\cong \Hom_A(eA, T)\cong \Hom_A(DT, D(eA))\cong \Hom_A(DT, Ae).
	\end{align} The last isomorphism follows from $A$ being gendo-symmetric. Note also that since $\Hom_A(T, -)$ is fully faithful on $\F(\Cs)$, we also have the isomorphism $eAe\cong \End_A(eA)\cong \End_{R_A}(\Hom_A(T, eA))$. Since Lemma 4.2 of  \cite{FanKoe3} remains true for properly stratified algebras, part (2) of the proof of Theorem 4.3 of \cite{FanKoe3} also holds for $R_A$. That is, $\domdim R_A\geq 2$. By Theorem 3.2 of \cite{FanKoe2}, $R_A$ is gendo-symmetric and (1) holds.  It is now clear that the analogous statement of \cite[Lemma 3.2(5)]{FanKoe3} holds for $R_A$. Finally, since the remaining arguments that are involved in the proof of  \cite[Theorem 4.3]{FanKoe3} remain true under the assumption of $(A, \textbf{e})$ being properly stratified in place of just being quasi-hereditary, (2) follows.
\end{proof}

We can now apply our results to calculate the Ringel dual of GIGS algebras that are minimal Auslander-Gorenstein algebras. Recall that an algebra $A$ is called \emph{minimal Auslander-Gorenstein} if it has finite Gorenstein dimension equal to the dominant dimension and both dimensions are at least two (we exclude selfinjective algebras in the definition for minimal Auslander-Gorenstein algebras here for simplicity as selfinjective algbras are not interesting for standardly stratified algebras). Minimal Auslander-Gorenstein algebras generalise higher Auslander algebras, which are exactly those minimal Auslander-Gorenstein algebras with finite global dimension. 

We will need the following result on minimial Auslander-Gorenstein algebras, see \cite[Theorem 4.5]{IyaSol}.

\begin{theorem}
There is a bijection between Morita equivalence classes of minimal Auslander-Gorenstein algebras $A$ of Gorenstein dimension $n \geq 2$ and equivalence classes of tuples $(B,N)$ of algebras $B$ with a generator-cogenerator $N$ of $\mod B$ such that $\Ext_B^i(N,N)=0$ for $i=1,...,n-2$ and $\tau(\Omega^{n-2}(N)) \in \add(N)$.
The bijection associates to $(B,N)$ the algebra $A=\End_B(N)$.

\end{theorem}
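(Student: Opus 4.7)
The plan is to combine the Morita--Tachikawa correspondence with two homological translations: one for dominant dimension (Mueller's theorem) and one for injective dimension (Auslander--Reiten duality).

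First, I would apply Morita--Tachikawa: algebras $A$ with $\domdim(A)\geq 2$ correspond bijectively (up to Morita equivalence on one side and the natural equivalence of pairs on the other) to pairs $(B,N)$ where $N$ is a generator--cogenerator of $\mod B$, via $(B,N)\mapsto\End_B(N)$, with inverse $A\mapsto(\End_A(M)^{\mathrm{op}},M)$ where $M$ is the minimal faithful projective--injective $A$-module. Any minimal Auslander--Gorenstein algebra has dominant dimension at least two by definition, so the problem reduces to identifying which pairs $(B,N)$ produce an algebra $A=\End_B(N)$ with $\domdim(A)=\idim(A_A)=n$.

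For the dominant dimension translation I would invoke Mueller's classical theorem: $\domdim(\End_B(N))\geq k+2$ if and only if $\Ext_B^i(N,N)=0$ for $1\leq i\leq k$. Specialising to $k=n-2$ converts the condition $\domdim(A)\geq n$ into exactly the Ext-vanishing appearing in the statement.

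The main step, and the expected obstacle, is to show that under this Ext-vanishing the bound $\idim(A_A)\leq n$ is equivalent to $\tau\Omega^{n-2}(N)\in\add(N)$. The strategy is to construct the beginning of a minimal injective coresolution of $A_A$ by applying the functor $\Hom_B(N,-)$ (which, restricted to $\add N\subset\mod B$, is an equivalence onto the projective--injective part of $\mod A$) to a minimal projective resolution of $N$ in $\mod B$. The Ext-vanishing just established guarantees that the first $n$ terms so produced are exact and projective--injective, and the next cosyzygy is controlled by $\Hom_B(N,\Omega^{n-2}N)$. Auslander--Reiten duality $D\Ext_B^1(X,Y)\cong\underline{\Hom}_B(Y,\tau X)$, combined with the fact that $N$ is a cogenerator (so every indecomposable injective $B$-module already lies in $\add N$), then rewrites the vanishing of the remaining obstruction cohomology as the statement that every indecomposable summand of $\tau\Omega^{n-2}(N)$ belongs to $\add(N)$. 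Reading the chain of equivalences backwards yields the converse, and the inequality $\idim(A_A)\geq\domdim(A)\geq n$ upgrades $\leq n$ to the equality demanded by the minimal Auslander--Gorenstein condition, completing the bijection.
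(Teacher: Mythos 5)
You should first note that the paper does not prove this statement at all: it is quoted verbatim from Iyama--Solberg \cite[Theorem 4.5]{IyaSol}, so your proposal is really competing with their proof. Your first two reductions are correct and are indeed the standard opening moves: Morita--Tachikawa identifies algebras of dominant dimension at least two with pairs $(B,N)$, and M\"uller's theorem converts $\domdim(\End_B(N))\geq n$ into $\Ext_B^i(N,N)=0$ for $1\le i\le n-2$. The problems are in the ``main step''. The projective--injective beginning of the minimal injective coresolution of $A_A$ is obtained by applying $\Hom_B(N,-)$ to a minimal \emph{injective coresolution} of $N$, not to a minimal projective resolution; with the Ext-vanishing one finds that the relevant cosyzygy is $\Omega^{-(n-1)}(A_A)\cong\Hom_B(N,\Omega^{-(n-2)}(N))$, i.e.\ it is governed by a \emph{cosyzygy} of $N$, not by $\Omega^{n-2}(N)$ as you assert. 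Moreover, deciding when this module has injective dimension at most one is not the vanishing of a single obstruction group that Auslander--Reiten duality rewrites for you: one needs the four-term exact sequence $0\to\Hom_B(N,\tau Z)\to D\Hom_B(P_1,N)\to D\Hom_B(P_0,N)\to D\Hom_B(Z,N)\to 0$ coming from the Nakayama functor applied to a minimal projective presentation $P_1\to P_0\to Z\to 0$, together with the identification of injective $A$-modules as $D\Hom_B(X,N)$ with $X\in\add(N)$. Carried out on the right-module side this machinery produces the condition $\tau^{-}(\Omega^{-(n-2)}(N))\in\add(N)$; the stated condition $\tau(\Omega^{n-2}(N))\in\add(N)$ arises when one controls the injective dimension of $A$ on the \emph{other} side. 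Your sketch conflates the two.

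This points to the genuine gap: ``minimal Auslander--Gorenstein'' requires $A$ to be Gorenstein, i.e.\ finite injective dimension of $A$ as a right \emph{and} as a left module, while your argument (even if repaired) bounds only one of them. The single hypothesis $\tau(\Omega^{n-2}(N))\in\add(N)$ corresponds a priori to one side only; to close the loop one must prove either that, for a generator-cogenerator with $\Ext_B^i(N,N)=0$ for $1\le i\le n-2$, the conditions $\tau\Omega^{n-2}(N)\in\add(N)$ and $\tau^{-}\Omega^{-(n-2)}(N)\in\add(N)$ are equivalent, or equivalently the left--right symmetry of the property $\idim A\le n\le\domdim A$. This is precisely the substantive content of Iyama--Solberg's precluster-tilting formalism (their definition includes both $\tau$- and $\tau^{-}$-conditions, and the symmetry is a theorem, not a formality); for $n=2$ a counting argument on indecomposable summands of a generator-cogenerator does it, but for $n>2$ the syzygy functor is not an equivalence and the Ext-vanishing must be used in an essential way. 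Your final step, upgrading $\idim(A_A)\le n$ to equality via $\domdim(A)\le\idim(A_A)$, is fine for non-selfinjective algebras (which the paper excludes by convention), but without the two-sidedness argument the proposed proof does not establish the bijection as stated.
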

Note that when $B$ is symmetric and $N=B \oplus M$ with $M$ having no projective direct summands as in the previous theorem, then $\tau \cong \Omega^2$ (see for example \cite[Proposition 3.8 in Chapter IV]{ARS}) and the condition $\tau(\Omega^{n-2}(N)) \in \add(N)$ simplifies to $\Omega^{n}(M) \cong M$.

\begin{theorem} \label{Ringeldualtheorem}
Let $A=\End_U(U \oplus M)$ be a GIGS algebra with a symmetric algebra $U$ and a generator $U \oplus M$ of $\mod U$, where we can assume that $M$ has no projective direct summands.
Assume $A$ is furthermore a minimal Auslander-Gorenstein algebra with Gorenstein dimension equal to $2d$ for some $d \geq 1$.
Then the Ringel dual $R_A$ of $A$ is isomorphic to $\End_U(U \oplus \Omega^d(M))$. In particular, $R_A$ is again a GIGS algebra that is minimal Auslander-Gorenstein with Gorenstein dimension $2d$.

\end{theorem}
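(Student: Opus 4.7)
The plan is to apply the Iyama--Solberg correspondence to translate the minimal Auslander--Gorenstein hypothesis into a concrete statement about $(U,N)$, and then mimic the Fang--Koenig argument in the GIGS setting made available by Theorem \ref{fankoetheorem}. Since $U$ is symmetric we have $\tau\cong \Omega^2$ on $\mod U$, so the preceding theorem and the remark after it translate the hypothesis into the two conditions
\[
\Ext_U^i(N,N)=0 \ \text{ for } \ 1\le i\le 2d-2, \qquad \Omega^{2d}(M)\cong M,
\]
where $N=U\oplus M$.

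First I would verify that $A':=\End_U(N')$ with $N':=U\oplus \Omega^d(M)$ is itself a GIGS algebra that is minimal Auslander--Gorenstein of Gorenstein dimension $2d$, by checking the Iyama--Solberg conditions for the pair $(U,N')$. The module $N'$ is a generator-cogenerator because $U$ is selfinjective and is a summand of $N'$. Since $\Omega$ is an autoequivalence of the stable category of the selfinjective algebra $U$, we have $\Ext_U^i(\Omega^d M,\Omega^d M)\cong \Ext_U^i(M,M)=0$ for $1\le i\le 2d-2$, while $\Ext_U^i(U,-)=\Ext_U^i(-,U)=0$ for $i\ge 1$; and $\Omega^{2d}(\Omega^d M)\cong \Omega^d(\Omega^{2d}M)\cong \Omega^d M$. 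Gendo-symmetry of $A'$ is automatic, and a duality is inherited from $U$: the duality $(-)^\natural$ interchanges $\Omega$ and $\Omega^{-1}$ on the stable category and $M^\natural \cong M$ (as $A$ has a duality), so $(\Omega^d M)^\natural \cong \Omega^{-d}(M)\cong \Omega^d M$ by $\Omega^{2d}M\cong M$. Thus $A'$ is a GIGS algebra of Gorenstein dimension $2d$.

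To identify $A'$ with $R_A$ I would exhibit the candidate characteristic tilting module $T:=\Hom_U(N,N')$ over $A$. Full faithfulness of $\Hom_U(N,-)$ (valid since $N$ is a generator) gives $\End_A(T)\cong \End_U(N')=A'$ at once. Applying $\Hom_U(N,-)$ to a projective resolution $0\to \Omega^d(M)\to P_{d-1}\to \cdots\to P_0\to M\to 0$ in $\mod U$ with each $P_i\in \add U\subseteq \add N$, the vanishing $\Ext_U^i(N,M)=0$ for $1\le i\le 2d-2$ preserves exactness and yields a projective resolution of $\Hom_U(N,\Omega^d M)$ in $\mod A$ of length at most $d$. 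Combined with the lower bound $\pd(T)=d$ coming from Theorem \ref{MazOvresult} and Proposition \ref{auslander-reitenfindimresult} (which give $2\pd(T)=\findim(A)=2d$ for the characteristic tilting module), this forces $\pd_A T = d$. A finite coresolution of $A$ by $\add T$ is obtained dually, by coresolving $N$ in $\mod U$ and using $\Omega^{-d}M\cong \Omega^d M$ in the stable category (a consequence of $\Omega^{2d}M\cong M$) together with the same Ext-vanishing; the orthogonality $\Ext_A^i(T,T)=0$ for $i\ge 1$ is inherited in the same way. Hence $T$ is a tilting module.

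The main obstacle is to show that this $T$ is the \emph{characteristic} tilting module of the properly stratified algebra $A$, i.e.\ that $T\in \F(\St)\cap \F(\pCs)$. In the quasi-hereditary case of \cite{FanKoe3} this is done by inspecting filtrations indexed by a poset; in the properly stratified setting the multiplicity of each indecomposable summand of $T$ is controlled by the Frobenius algebra $\End_A(\St(i))$ from Theorem \ref{endofstandardisfrobenius}, and one invokes our Theorem \ref{mainresult} to replace appeals to finite global dimension by the coincidence of the characteristic tilting and cotilting modules. Once these ingredients are in place, the Fang--Koenig filtration computation transfers verbatim, so $R_A=\End_A(T)\cong \End_U(U\oplus \Omega^d M)=A'$; the ``in particular'' clause then follows from the second paragraph.
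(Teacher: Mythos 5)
There is a genuine gap, and it sits exactly where you yourself flag ``the main obstacle''. The whole content of the theorem is the identification of the characteristic tilting module of $A$, and your proposal never establishes that your candidate $T=\Hom_U(N,N')$ lies in $\F(\St)\cap\F(\pCs)$: the closing claim that ``the Fang--Koenig filtration computation transfers verbatim'' is an assertion, not an argument, and in the properly stratified (infinite global dimension) setting it is precisely what needs proof. The paper avoids this by quoting an external result (\cite[Theorems 2.5 and 2.6]{Mar}), which says that for such an algebra the characteristic tilting module is $eA\oplus\Omega_A^{-d}(A)$ with $eA$ the minimal faithful projective-injective; after that the computation is purely formal: $\Hom_A(eA,-)$ gives $\End_A(eA\oplus\Omega^{-d}(A))\cong\End_{eAe}(eAe\oplus\Omega^{-d}(Ae))\cong\End_U(U\oplus\Omega^{-d}(M))$ by \cite[Lemma 3.1]{APT} and Morita--Tachikawa, and $2d$-periodicity of $M$ turns $\Omega^{-d}(M)$ into $\Omega^{d}(M)$. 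Without this input (or a proof replacing it) your argument has a hole at its centre.

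There is also a concrete false step earlier. Applying $\Hom_U(N,-)$ to $0\to\Omega^d(M)\to P_{d-1}\to\cdots\to P_0\to M\to 0$ cannot yield a projective resolution of $\Hom_U(N,\Omega^d M)$ of length $\le d$: note that $\Hom_U(N,M)$ is itself projective over $A$ (as $M\in\add N$), so if the image complex were exact everywhere, then splitting off from the right would force $\Hom_U(N,\Omega^d M)$ to be projective, which fails whenever $\Omega^d(M)\notin\add N$. In fact exactness breaks at the right end: the cokernel of $\Hom_U(N,P_0)\to\Hom_U(N,M)$ is the stable endomorphism ring $\underline{\End}_U(M)\neq 0$, so your sequence exhibits $\Hom_U(N,\Omega^d M)$ as a high \emph{syzygy}, not as a module of small projective dimension. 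The correct way to bound $\pd_A T$ is through a coresolution of $A$ (equivalently of $N$ in $\mod U$) by projective-injectives, which is exactly what the identification $T\cong eA\oplus\Omega_A^{-d}(A)$ provides in the paper; note also that for $d=1$ the Ext-vanishing range $1\le i\le 2d-2$ is empty, so even a repaired version of your Ext-vanishing argument needs separate care there. Your first paragraph (Iyama--Solberg conditions for $(U,U\oplus\Omega^d M)$) is fine in spirit, but the duality on $\End_U(U\oplus\Omega^d M)$ is asserted rather than proved; in the paper the GIGS property of $R_A$ is obtained from Theorem \ref{fankoetheorem} together with the Hu--Xi almost $\nu$-stable derived equivalence, which also preserves the Gorenstein and dominant dimensions -- a route you could adopt once the identification of $R_A$ itself is actually established.
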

\begin{proof}
$A$ is properly stratified and Gorenstein and thus we can apply our main result \ref{mainresult} to conclude that the characteristic tilting module of $A$ coincides with the characteristic cotilting module, which we denote by $T$.
By \cite[Theorem 2.5 and Theorem 2.6.]{Mar}, $T \cong eA \oplus \Omega^{-d}(A)$ when $eA$ denotes the minimal faithful projective-injective $A$-module.
Now $R_A \cong \End_A(eA \oplus \Omega^{-d}(A))$.
Since $d \geq 1$, the module $T$ has dominant dimension at least one and codominant dimension at least one.
By \cite[Lemma 3.1 (2) (ii)]{APT} the functor $\Hom_A(eA,-)$ induces an isomorphism of algebras
between $\End_A(eA \oplus \Omega^{-d}(A))$ and $\End_{eAe}(eAe \oplus \Omega^{-d}(A)e)$.
Now since the functor $\Hom_A(eA,-)$ is exact and sends projective-injective $A$-modules to injective $eAe$-modules, we have $\Omega^{-d}(A)e \cong \Omega^{-d}(Ae)$. Thanks to the Morita-Tachikawa correspondence and since $eAe$ is symmetric, we have $Ae \cong U \oplus M$ as $(A, eAe)$-bimodules and thus 
$$R_A \cong \End_A(eA \oplus \Omega^{-d}(A)) \cong \End_{eAe}(eAe \oplus \Omega^{-d}(Ae))\cong \End_U(U \oplus \Omega^{-d}(M)).$$
Since $eAe$ is symmetric and $A$ is minimal Auslander-Gorenstein, $M$ is $2d$-periodic: $\Omega^{2d}(M) \cong \tau(\Omega^{2d-2}(M)) \cong M$ and thus $\End_U(U \oplus \Omega^{-d}(M)) \cong \End_U(U \oplus \Omega^{d}(M))$.
When $B$ is a general symmetric algebra and $N$ a $B$-module without projective direct summands, the algebras $\End_B(B \oplus N)$ and $\End_B(B \oplus \Omega^i(N))$ are almost $\nu$-stable derived equivalent in the sense of \cite{HX} and this derived equivalence preserves the Gorenstein and dominant dimensions, see \cite[Corollary 1.3 (2)]{HX} and \cite[Corollary 1.2]{HX}.
Now, the Ringel dual has also a duality and is properly stratified  by Theorem \ref{fankoetheorem} and thus the statement follows.
\end{proof}

We have the following corollary:
\begin{corollary} \label{ringelselfdualcorollary}
Let $A$ be a GIGS  basic algebra that is minimal Auslander-Gorenstein.
Then $A$ is Ringel self-dual if and only if $A$ is isomorphic to its Ringel dual $R_A$.

\end{corollary}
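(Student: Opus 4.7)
The first direction, that $A \cong R_A$ as algebras implies Ringel self-duality, is essentially immediate: an algebra isomorphism transports the ordered idempotent structure of $A$ to $R_A$, and the resulting Morita equivalence $\mod A \simeq \mod R_A$ restricts to an exact equivalence $\mathcal{F}(\Delta^A) \simeq \mathcal{F}(\Delta^{R_A})$.

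For the converse, the plan is to characterise $A_A$ intrinsically in the exact category $\mathcal{F}(\Delta^A)$, so that any exact equivalence $F \colon \mathcal{F}(\Delta^A) \to \mathcal{F}(\Delta^{R_A})$ must send $A_A$ to $(R_A)_{R_A}$ and hence induce an algebra isomorphism via endomorphism rings. The key step will be to identify the projective objects of the exact category $\mathcal{F}(\Delta^A)$ with the projective $A$-modules. In one direction this is automatic; in the other, using that $\mathcal{F}(\Delta^A)$ is resolving in $\mod A$ by Theorem~\ref{propstratresults}(a), I would take an $\Ext$-projective $P \in \mathcal{F}(\Delta^A)$ together with its projective cover $0 \to K \to Q \to P \to 0$ in $\mod A$; since $\mathcal{F}(\Delta^A)$ is closed under kernels of epimorphisms, $K \in \mathcal{F}(\Delta^A)$, so $\Ext_A^1(P,K)=0$ forces the sequence to split, making $P$ a summand of $Q$.

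With this identification in hand, the basic projective generator of the exact category $\mathcal{F}(\Delta^A)$ is $A_A$ (as $A$ is basic by assumption), and the analogous statement holds for $\mathcal{F}(\Delta^{R_A})$: indeed, by Theorem~\ref{fankoetheorem}, $R_A$ is a GIGS algebra, and it is basic since the characteristic tilting module $T$ of $A$ is basic and $R_A = \End_A(T)$. An exact equivalence $F$ preserves these intrinsic properties, so $F(A_A) \cong (R_A)_{R_A}$, and passing to endomorphism rings yields
\[
A \;\cong\; \End_A(A_A) \;\cong\; \End_{R_A}(F(A_A)) \;\cong\; \End_{R_A}((R_A)_{R_A}) \;\cong\; R_A.
\]

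The only potentially delicate point is the characterisation of $\Ext$-projectives inside $\mathcal{F}(\Delta^A)$, but as indicated this follows cleanly from the resolving property; the remainder of the argument is a straightforward transport along the exact equivalence, so I do not expect any serious obstacle.
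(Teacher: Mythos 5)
Your argument for the direction ``Ringel self-dual $\Rightarrow A\cong R_A$'' is fine and is essentially what the paper asserts tersely: the Ext-projective objects of the exact category $\mathcal{F}(\Delta)$ are exactly the projective modules (your resolving/projective-cover argument is correct), an exact equivalence matches them up, and basicness of $A$ and of $R_A=\End_A(T)$ (with $T$ basic) then gives the isomorphism via endomorphism rings.

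The genuine gap is in the converse, which you dismiss as ``essentially immediate''. An abstract algebra isomorphism $\phi\colon A\to R_A$ does \emph{not} automatically respect the stratifications: it transports the order and idempotents of $A$ to \emph{some} properly stratified structure on $R_A$, but Ringel self-duality requires an exact equivalence between $\mathcal{F}(\Delta^A)$ and $\mathcal{F}(\Delta^{R_A})$ for the canonical structure on $R_A$ coming from Ringel duality. The Morita equivalence induced by $\phi$ carries $\mathcal{F}(\Delta^A)$ to $\mathcal{F}(\phi(\Delta^A))$, which a priori is a different subcategory of $\mod R_A$ than $\mathcal{F}(\Delta^{R_A})$; for a general basic properly stratified algebra there is no reason these coincide, since $\mathcal{F}(\Delta)$ depends on the chosen order. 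Notice that in your proof of this direction the hypotheses ``GIGS'' and ``minimal Auslander-Gorenstein'' are never used --- a warning sign, since without them the statement is not expected to hold. The paper closes exactly this gap: by the Main theorem of \cite{Mar}, for a GIGS algebra $C$ that is minimal Auslander-Gorenstein, $\mathcal{F}(\Delta)$ is intrinsically the subcategory of modules of projective dimension at most $r/2$, where $r$ is the Gorenstein dimension; combined with Theorem \ref{Ringeldualtheorem} (which gives that $A$ and $R_A$ have the same Gorenstein dimension), this description is independent of the order and is preserved by any Morita equivalence, so the equivalence induced by $\phi$ does restrict to an exact equivalence $\mathcal{F}(\Delta^A)\simeq\mathcal{F}(\Delta^{R_A})$. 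You need to add this step (or some substitute showing the transported structure is equivalent to the canonical one) for the converse to be valid.
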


\begin{proof}
For any properly stratified algebra $A$, being Ringel self-dual implies that $A$ and $R_A$ are, in particular, Morita equivalent. Since both algebras are basic, $A$ and $R_A$ are isomorphic.
Now assume that $A$ and $R_A$ are isomorphic, then the isomorphism induces a Morita equivalence between $\mod A$ and $\mod R_A$. Now by \cite[Main theorem]{Mar} a GIGS algebra $C$ that is minimal Auslander-Gorenstein has the property that $\mathcal{F}(\Delta)$ is equal to the subcategory of $\mod C$ consisting of the modules with projective dimension at most $\frac{r}{2}$ when $r$ is the Gorenstein dimension of $C$.
Since by \ref{Ringeldualtheorem} $A$ and $R_A$ have the same Gorenstein dimension and a Morita equivalence preserves the projective dimension of modules, the subcategories $\mathcal{F}(\Delta^A)$ and $\mathcal{F}(\Delta^{R_A})$ are isomorphic and $A$ is Ringel self-dual. 
\end{proof}

\subsection{Centraliser algebras of nilpotent matrices}\label{Centraliser algebras of nilpotent matrices}

Let $H$ be an arbitrary nilpotent matrix in $K^{n \times n}$ for a field $K$ and $n \geq 2$.
Then it is well known, see for example \cite[Proposition 4.1]{AW}, that the centraliser $\{ X \in K^{n \times n} \mid XH=HX \}$ is isomorphic to $\Hom_{K[x]}(V_H, V_H)$, where we view $V_H \cong K^n$ as an $K[x]$-module by letting $x$ act as $H$ on $K^n$.
Now since $H$ is nilpotent $x^n$ annihilates $V_H$ and thus $\Hom_{K[x]}(V_H, V_H) \cong \Hom_{K[x]/(x^n)}(V_H, V_H)$ is a finite dimensional $K$-algebra. Since every indecomposable $K[x]/(x^n)$-module is isomorphic to a module of the form $(x^k)/(x^n)$ for some $k=0,1,...,n-1$ we can assume that $V_H$ is a direct sum of such indecomposable modules and we can up to Morita equivalence assume that $V_H$ is basic. Furthermore, we can assume that $H$ is nilpotent with $H^n=0$ and $H^{n-1} \neq 0$ and thus that the indecomposable module $K[x]/(x^n)$ (which is the unique indecomposable projective $K[x]/(x^n)$-module) appears as a direct summand.
We call $\Hom_{K[x]}(V_H, V_H) \cong \Hom_{K[x]/(x^n)}(V_H, V_H)$ the \emph{centraliser algebra} of the nilpotent matrix $H$. 
A classical theorem of Frobenius gives the vector space dimension of a general centraliser algebra of a matrix. For this and more on centraliser algebras we refer for example to \cite[Section 5.5]{AW}.
We will see that this algebra satisfies the assumption of \ref{Ringeldualtheorem} and we will use this to determine when the algebra is Ringel self-dual.

We now fix notation for our problem.
Let $U=K[x]/(x^n)$ and $N=U \oplus M$ an arbitrary non-projective generator of $\mod U$, where we can assume that $M$ has no projective direct summands and up to Morita equivalence we can also assume that $N$ is basic. The algebra $A=\End_U(N)$ is properly stratified (see \cite[Theorem 2.4]{CheDl}) and has a duality (this is explained in \cite{CheDl} at the end of page 64). 

\begin{example}
Let $U=K[x]/(x^3)$ and $N=U \oplus S$, where $S$ is the unique simple $U$-module.
Then $A=\End_U(N)$ is isomorphic to the following quiver algebra $KQ/I$ with quiver $Q$:
\begin{tikzcd}
	1 \arrow[r, "\beta_1", shift left=0.75ex] & 2 \arrow[loop, distance=3em, out=35, in=-35, "\beta_3"] \arrow[l, "\beta_2", shift left=0.75ex]
\end{tikzcd}
and relations $I=\langle \beta_1 \beta_2 , \beta_1 \beta_3 , \beta_3 \beta_2 , \beta_2 \beta_1 - \beta_3^2 \rangle$. $A$ has infinite global dimension and thus is not quasi-hereditary.
\end{example}

\begin{proposition}
$A=\End_U(N)$ is a GIGS algebra that is minimal Auslander-Gorenstein of Gorenstein dimension 2. It is Ringel self-dual if and only if $M \cong \Omega^1(M)$.

\end{proposition}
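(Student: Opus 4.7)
The plan is to verify that $A = \End_U(U \oplus M)$ satisfies all hypotheses of Theorem \ref{Ringeldualtheorem} with $d = 1$, and then to translate Ringel self-duality into a statement about $U$-modules. First, $A$ is gendo-symmetric because $U$ is symmetric and $U \oplus M$ is a generator, and it is properly stratified with a duality by \cite{CheDl}. To obtain the Gorenstein property and pin down the dimension I will apply the Iyama-Solberg characterisation stated just before Theorem \ref{Ringeldualtheorem} with $n = 2$: the $\Ext$-vanishing condition is vacuous, and the remaining hypothesis $\tau(U \oplus M) \in \add(U \oplus M)$ follows from the fact that $U = K[x]/(x^n)$ is a symmetric Nakayama algebra with $\Omega(U/(x^k)) \cong U/(x^{n-k})$, so $\Omega^2$ restricts to the identity on the stable category; combined with $\tau \cong \Omega^2$ for symmetric algebras and the standing assumption that $M$ has no projective summands, this yields $\tau M \cong M$. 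Hence $A$ is minimal Auslander-Gorenstein of Gorenstein dimension exactly $2$, and in particular a GIGS algebra.

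With $d = 1$, Theorem \ref{Ringeldualtheorem} gives $R_A \cong \End_U(U \oplus \Omega(M))$. Since $A$ is basic by the standing assumption that $N$ is basic, Corollary \ref{ringelselfdualcorollary} reduces Ringel self-duality of $A$ to the question of whether $A \cong R_A$ as $K$-algebras.

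The last step is to show that $\End_U(U \oplus M) \cong \End_U(U \oplus \Omega M)$ if and only if $M \cong \Omega M$. The direction $\Leftarrow$ is trivial; for $\Rightarrow$ I will appeal to the Morita-Tachikawa correspondence: from a basic gendo-symmetric algebra $A'$ one canonically recovers the associated symmetric algebra as $eA'e$ and its generator as $A'e$ for the minimal faithful projective-injective idempotent $e$. Consequently an algebra isomorphism $A \cong R_A$ induces an isomorphism $U \oplus M \cong U \oplus \Omega(M)$ as $U$-modules, and Krull-Schmidt — using that $\Omega M$ has no projective summands because $U$ is selfinjective — then forces $M \cong \Omega M$. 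The main subtlety in the plan is precisely this last translation from an abstract algebra isomorphism to a $U$-module isomorphism; the remaining steps are direct invocations of theorems established earlier in the paper.
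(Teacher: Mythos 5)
Your first half runs exactly parallel to the paper: properly stratified with duality from \cite{CheDl}, gendo-symmetric by definition, and minimal Auslander--Gorenstein of Gorenstein dimension $2$ because $M$ is stably $2$-periodic over $U=K[x]/(x^n)$ (the paper compresses this to ``since $M$ is $2$-periodic''; your explicit appeal to the Iyama--Solberg bijection with $n=2$ and $\tau\cong\Omega^2$ is the same argument spelled out). The reduction via Theorem \ref{Ringeldualtheorem} and Corollary \ref{ringelselfdualcorollary} to the question $\End_U(U\oplus M)\cong\End_U(U\oplus\Omega^1(M))\,\Leftrightarrow\, M\cong\Omega^1(M)$ is also the paper's reduction. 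Where you diverge is the forward implication of that last equivalence, and this is where your argument has a gap.

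The claim that an abstract algebra isomorphism $\End_U(U\oplus M)\cong\End_U(U\oplus\Omega^1(M))$ ``induces an isomorphism $U\oplus M\cong U\oplus\Omega^1(M)$ as $U$-modules'' is stronger than what the Morita--Tachikawa correspondence gives. The recovery of $(eAe, Ae)$ from $A$ is canonical for a single algebra, but an isomorphism between the two endomorphism rings only identifies the two pairs up to an isomorphism of pairs: it produces an algebra automorphism $\sigma$ of $U$ (coming from the composite $U\cong e_1B_1e_1\rightarrow \varphi(e_1)B_2\varphi(e_1)\cong e_2B_2e_2\cong U$) together with a $\sigma$-semilinear isomorphism of the generators, i.e.\ $U\oplus M\cong \sigma^*(U\oplus\Omega^1(M))$. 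In general this does \emph{not} force the generators to be isomorphic: for a symmetric Nakayama algebra $B$ with several simples and the rotation automorphism $\sigma$, the generators $B\oplus S$ and $B\oplus\sigma^*(S)$ have isomorphic endomorphism rings but are not isomorphic. So the step as written would fail for a general symmetric $U$ and needs a specific input here, namely that every automorphism of $K[x]/(x^n)$ preserves the radical powers $(x^k)$, hence fixes each indecomposable $U/(x^k)$ up to isomorphism; only then does the twist become harmless and Krull--Schmidt yield $M\cong\Omega^1(M)$. With that one extra observation your argument is complete, and it is genuinely different from (and arguably more conceptual than) the paper's: the paper sidesteps the recovery problem entirely by a hands-on computation, comparing the dimensions of the indecomposable projective modules of $\End_U(U\oplus M)$ and $\End_U(U\oplus\Omega^1(M))$ via $\dim\Hom_U(U/J^k,U/J^t)=\min(k,t)$ and deducing $p_i=n-p_{r-i}$ for the Loewy lengths of the summands. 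Your route, once repaired, generalises more readily to other base algebras whose self-Morita-equivalences act trivially on isomorphism classes of modules, whereas the paper's count is elementary and self-contained but tied to the uniserial structure of $K[x]/(x^n)$.
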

\begin{proof}
That $A$ is properly stratified with a duality follows from \cite{CheDl}. Now $A$ is minimal Auslander-Gorenstein of Gorenstein dimension 2 since $M$ is 2-periodic. 
Thus $A$ is a GIGS algebra that is Auslander-Gorenstein and we can use \ref{ringelselfdualcorollary} that shows that $A$ is Ringel self-dual if and only if $A$ is isomorphic to its Ringel dual.
By \ref{Ringeldualtheorem}, the Ringel dual of $A$ is isomorphic to $\End_U(U \oplus \Omega^1(M))$.
Thus when $M \cong \Omega^1(M)$, $A$ is Ringel self-dual. \newline
Now assume that $A=\End_U(N)$ is Ringel self-dual. We show that then $M \cong \Omega^1(M)$.
Let $M$ have indecomposable non-projective direct summands $U/J^{p_i}$ for $i=0,...,r$ with $p_0<p_1<...<p_r$, where we recall that $J$ denotes the Jacobson of the algebra $U$.
Then $\Omega^1(M)$ has indecomposable non-projective direct summands $U/J^{n-p_i}$.
If $B_1:=\End_U(U \oplus M)$ and $B_2:=\End_U(U \oplus \Omega^1(M))$ are isomorphic, which is equivalent to being Ringel self-dual, they have the same dimensions of their indecomposable projective modules.
Note that we have in general $\Hom_U(U/J^k, U/J^t) \cong J^{\max(0,t-k)}/J^t$ and thus $\dim(\Hom_U(U/J^k, U/J^t))=\dim J^{\max(0,t-k)}/J^t=\min(k,t)$. 
The indecomposable projective $B_1$-modules are given by $P_{B_1}^i=\Hom_U(U \oplus M , U/J^{p_i})$.
Similarly, the indecomposable projective $B_2$-modules are given by $P_{B_2}^i=\Hom_U(U \oplus \Omega^1(M), U/J^{n-p_{r-i}})$.
Here we choose the ordering for the indecomposable projective $B_1$ and $B_2$ modules in increasing order of their vector space dimensions. When $B_1$ and $B_2$ are isomorphic, we have $\dim(P_{B_1}^i)=\dim(P_{B_2}^i)$ for $i=0,...,r$.
Now $\dim\Hom_U(U \oplus M , U/J^{p_i})=\dim \Hom_U(U, U/J^{p_i})+ \dim \Hom_U(M, U/J^{p_i})=p_i+ \sum\limits_{k=0}^{r}{\min(p_k,p_i)}.$
In the same way we obtain $\dim \Hom_U(U \oplus \Omega^1(M), U/J^{n-p_{r-i}})=n-p_{r-i}+ \sum\limits_{s=0}^{r}{\min(n-p_s,n-p_{r-i})}.$
The condition $\dim(P_{B_1}^0)=\dim(P_{B_2}^0)$ gives us that $p_0+(r+1)p_0=n-p_r+(r+1)(n-p_r)$ and thus $p_0=n-p_r$ and using induction and the equations $\dim(P_{B_1}^i)=\dim(P_{B_2}^i)$ we conclude that $p_i=n-p_{r-i}$ for all $i=0,...,r$.
This is equivalent to $M= \Omega^1(M)$.
\end{proof}

Thus in the situation of the previous algebra, we have Ringel self-duality if and only if the generators are  stable 1-periodic.

\subsection{Representation-finite blocks of Schur algebras}
The next application shows that the representation-finite blocks of Schur algebras are Ringel self-dual even when the generator is not stable $d$-periodic.
For statements without proofs about the representation-finite blocks of Schur algebras, we refer for example to \cite[section 4]{Mar}.
Recall that the representation-finite blocks of Schur algebras are given by quiver and relations as 
\[
\begin{tikzcd}[every arrow/.append style={bend left}]
	1 \arrow[r, "a_1"]  & 2 \arrow[l, "b_1"] \arrow[r, "a_2"] & 3 \arrow[l, "b_2"] \arrow[r, "a_3"] & 
	4 \arrow[l, "b_3"] & \cdots & m-1 \arrow{r}[above]{a_{m-1}} & m \arrow{l}[below]{b_{m-1}}
\end{tikzcd}, \quad m\geq 1,
\] $b_{m-1} a_{m-1}, \ b_{i-1}a_{i-1}-a_i b_i,\ a_{i-1}a_i, \ b_i b_{i-1}, \ i=2, \ldots, m-1$.
We denote the quiver algebra of a representation-finite block of a Schur algebra with $n$ simples by $\mathcal{A}_n$. By $P(i)$ we mean the projective indecomposable of $\mathcal{A}_n$ associated to the vertex $i$. We denote by $\mathcal{B}_n$ the endomorphism algebra $\End_{A_{n+1}}(P(1)\oplus \cdots \oplus P(n))$ if $n\geq 1$ and $\mathcal{B}_0=\mathcal{A}_1$. These algebras have the same quiver as $\mathcal{A}_{n}$ ($\mathcal{A}_1$ if $n=0$) and these correspond to the finite-type blocks of basic algebras of the group algebras of a symmetric group. Of course, $\mathcal{B}_0$ corresponds to the simple block. Hence, the interesting case lies in $n\geq 1$ which we will assume from now on. Moreover, $(\mathcal{A}_{n+1}, P(1)\oplus \cdots \oplus P(n))$ is a quasi-hereditary cover of $\mathcal{B}_n$ and $\mathcal{A}_{n+1}$ has global dimension and dominant dimension equal to $2n$. In particular, $\mathcal{A}_{n+1}\cong \End_{\mathcal{B}_n}(\mathcal{B}_n\oplus S_n)$, where $S_i$ denotes the simple module in the quiver of $\mathcal{B}_n$ corresponding to vertex $i$.  Thus, the Ringel dual of $\mathcal{A}_{n+1}$ is isomorphic to $\End_{\mathcal{B}_n}(\mathcal{B}_n \oplus \Omega^n(S_n))\cong \End_{\mathcal{B}_n}(\mathcal{B}_n\oplus S_1)$ by Theorem  \ref{Ringeldualtheorem}, since $\Omega^n(S_n) \cong S_1$.
Note that due to the symmetry in the relations, we also have $\mathcal{A}_{n+1}\cong \End_{\mathcal{B}_n}(\mathcal{B}_n\oplus S_1)$ and $\mathcal{A}_{n+1}$ is Ringel self-dual. Thus, in contrast to the previous class of examples, one can have Ringel self-duality even when the corresponding modules $U\oplus M$ and $U\oplus \Omega^d(M)$ are not isomorphic.

\section*{Acknowledgments} 
Rene Marczinzik is funded by the DFG with the project number 428999796. Tiago Cruz is funded by the \emph{Studienstiftung des Deutschen Volkes}.
The proof of part (1) of Theorem \ref{endofstandardisfrobenius} was first proven by Volodymyr Mazorchuk, who shared it with the second author via email. We followed his sketch of the proof and thank him for allowing us to use this result in the present article. We thank Steffen Koenig for helpful comments and suggestions.

We profited from the use of the GAP-package \cite{QPA}.

\bibliographystyle{alphaurl}
\bibliography{ref}

\end{document}